\newtheorem{theorem}{Theorem}[section]
\newtheorem{proposition}{Proposition}[section]
\newtheorem{lemma}{Lemma}[section]
\newtheorem{corollary}{Corollary}[section]
\newtheorem{definition}{Definition}[section]
\newtheorem{remark}{Remark}[section]
\newcommand{\A}{{\mathbf A}}
\newcommand{\CA}{{\rm QHO}}
\newcommand{\CCC}{{\mathbf a}\,} 
\newcommand{\BB}{{\mathbf a}^\dagger}
\newcommand{\QQ}{{\rm Q}}
\newcommand{\E}{{L}}
\newcommand{\Ll}{{\mathbb P}}
\newcommand{\NN}{{\mathbf N}}
\newcommand{\ctp}{ctp}
\newcommand{\tp}{tp}
\newcommand{\Stab}{Stab}
\newcommand{\kk}{\mathbbm k} 
\newtheorem{pkt}{}[subsection]  
\newcommand{\bpk}{\begin{pkt}\rm }  
\newcommand{\epk}{\end{pkt}}  
\newtheorem{Lemm}{Lemma}[subsection]  
\newtheorem{Prop}{Proposition} 
\newtheorem{Cor}{Corollary}
 \newcommand{\Hom}{{\rm Hom}}
\newcommand{\R}{{\mathbb R}}   
\newcommand{\Z}{{\mathbb Z}}   
\newcommand{\N}{{\mathbb N}}  
\newcommand{\C}{{\mathbb C}}  
\newcommand{\ee}{\end{equation}}  
\newcommand{\be}{\begin{equation}}
\newcommand{\bl}{\begin{Lemm}}   
\newcommand{\el}{\end{Lemm}}   
\newcommand{\bt}{\begin{Theo}}   
\newcommand{\et}{\end{Theo}}   
\newcommand{\bp}{\begin{Prop}}   
\newcommand{\ep}{\end{Prop}}   
\newcommand{\inv}{^{-1}}   
\newcommand{\bc}{\begin{Cor}}   
\newcommand{\ec}{\end{Cor}}   
\newcommand{\lb}{\label}  
\newcommand{\subs}{\subseteq}
\newcommand{\eps}{\epsilon}   
\newcommand{\XX}{{\rm H}}
\newcommand{\pp}{{\bf p}}
\newcommand{\PPP}{{\rm P}} 
\newcommand{\Aa}{\mathcal{A}}
\DeclareMathOperator{\Gal}{Gal}
\DeclareMathOperator{\ACF}{ACF}
\DeclareMathOperator{\eq}{eq}
\DeclareMathOperator{\sep}{sep}
\DeclareMathOperator{\alg}{alg}
\title{Quantum Harmonic Oscillator as Zariski Geometry}
\author{Vinesh Solanki, Dmitry Sustretov and Boris Zilber}
\begin{document}

\maketitle


\section{Introduction and Background}
The present paper investigates an example of physical interest (the
quantum harmonic oscillator) using model-theoretic
methods. Specifically, we associate to this system a structure $\CA_N$
(dependent on the positive integer number $N$) on the universe $\E$
which is a finite cover of order $N,$ of the projective line
$\Ll=\Ll(\mathbb{F}),$ $\mathbb{F}$ an algebraically closed field of
characteristic $0.$ We prove that $\CA_N$ is a complete irreducible
Zariski geometry of dimension $1.$ We also prove that {\em $\CA_N$ is
  not classical} in the sense that the structure is not interpretable
in an algebraically closed field and, for the case $\mathbb{F}=\C,$ is
not a structure on a complex manifold.

There are several reasons that motivate our interest in this
particular example. Model-theoretically, very ample Zariski geometries
(as introduced in \cite{HZ}) give an abstract characterization of the
geometry of algebraic varieties whereas ample Zariski geometries can
only be shown to be finite covers of projective curves over
$\mathbb{F}$ for some algebraically closed field $\mathbb{F}$. In
\cite{HZ} a class of ample but not very ample Zariski geometries is
constructed; the construction involves certain non-abelian group
extensions and produces Zariski geometries which are acted upon by
these groups via Zariski automorphisms. The structure $\CA_N$ is an
example of an ample but not very ample Zariski geometry differing from
the examples in \cite{HZ}. In this regard, we hope that $\CA_N$ sheds
some new light on the ample/very ample distinction.

From a representation-theoretic perspective, the relevant algebra for
the analysis of the quantum harmonic oscillator is the Heisenberg
algebra; namely the algebra over $\mathbb{F}$ with generators $\PPP$
and $\QQ$ satisfying the relation \be \label{QP} [\QQ,\PPP] =
\QQ\PPP-\PPP\QQ=i,\ee where $i$ denotes a specific choice of
$\sqrt{-1}$ in $\mathbb{F}$. Note that this algebra is isomorphic to
the $\mathbb{F}$-algebra with generators $\NN, \BB, \CCC$ subject to
the relations
\[ [\NN,\BB] = \BB \qquad [\NN,\CCC] = -\CCC \qquad [\CCC,\BB] = 1 \] with isomorphism given by
\[ \NN \mapsto \frac{1}{2}(\PPP^2+\QQ^2) - \frac{1}{2} \qquad \CCC
\mapsto \frac{1}{\sqrt{2}}(\QQ + i\PPP) \qquad \BB \mapsto
\frac{1}{\sqrt{2}}(\QQ - i\PPP) \] The reader familiar with physics
will recognize $\BB$ and $\CCC$ as being creation and annihilation
operators respectively and $\NN$ as being the number operator of the
system (Hamiltonian minus $1/2$). The structure $\CA_N$ has a quotient
$Q_{N}$ which is a bundle of eigenspaces of $\NN$ and will be seen to
consist virtually of an uncountable collection of irreducible
representations of the Heisenberg algebra $A$ (each of countably
infinite dimension over $\mathbb{F}$) which are neither highest nor
lowest weight.

We proceed to give a summary of this paper. In section 2, the relevant
structures $\CA_N$ and $Q_{N}$ are defined. We demonstrate that for
even $N$, the theory of $\CA_N$ is categorical in uncountable
cardinals. Sections 4 and 5 are devoted to the aforementioned
non-interpretability proofs, after the introduction of appropriate
background material on Galois cohomology. In section 6, we carry out
an analysis of definable sets in $Q_{N}$ using the methods of
\cite{Zil2}. In particular, it is shown that the structure $Q_{N}$ is
a Zariski structure (according to the definition of \cite{Zil1}). It
should be noted that the algebra $A$ differs from the class of {\em
  quantum algebras at roots of unity} considered in \cite{Zil2}
insofar as irreducible modules for $A$ are necessarily
infinite-dimensional. Consequently, this paper constitutes an
extension of the construction and method of proof in \cite{Zil2} to a
wider class of noncommutative algebras, although it is not clear at
present precisely what this class of noncommutative algebras
is. Finally, it follows that $\CA_N$ is a Zariski geometry by virtue
of the two structures $\CA_N$ and $Q_{N}$ being bi-interpretable.

Some remarks about the methods of noncommutative geometry (algebraic
or \'a la Connes) and possible interactions with model theory are in
order. A feature of noncommutative geometry is that concrete
constructions of geometric counterparts to the algebras studied isn't
carried out, the algebraic (or $C^{*}$-algebraic) methods in
themselves sufficing for any ``geometric" arguments one may need to
produce. It is our belief, that though these methods are very powerful
in their own right, the absence of geometric counterparts
corresponding to (noncommutative) algebras results in a picture that
is incomplete. This paper, and the paper \cite{Zil2} represent steps
taken in the direction towards filling this gap. Furthermore, given
that the notion of a Zariski geometry provides an abstract
characterization of the geometry on an algebraic variety,
model-theoretically one has the means of proving that a non-classical
geometric structure associated to a specific noncommutative algebra is
suitably algebro-geometric.

It should be noted that the Heisenberg algebra is a $*$-algebra: it is
an algebra equipped with an additional operation $*$ which associates
to each element $X$ of the algebra an element $X^{*}$, seen (by
analogy with Hilbert space theory) as the {\em adjoint} to $X$. It is
not a $C^{*}$-algebra: any representation of the Heisenberg algebra as
an algebra of operators on a Hilbert space must, by the nature of the
defining relations, result in at least one of the operators being
unbounded. Of course, an important theorem in the representation
theory of $C^{*}$-algebras is that any $C^{*}$-algebra can be
represented as an algebra of {\em bounded} operators on some Hilbert
space. Consequently, one does not have many of the methods available
to non-commutative geometers to study this algebra directly. The {\em
  Weyl algebra} (the `exponential' of the Heisenberg algebra) is a
$C^{*}$-algebra and is consequently the favoured object of study. What
is interesting about the approach developed in this paper is that this
apparent issue with the Heisenberg algebra has not manifested itself
geometrically: the corresponding geometry is still rich.

By the postulates of quantum mechanics, $\PPP$ and $\QQ$ are
considered to be self-adjoint (self-adjoint operators have real
eigenvalues). Consequently, so is $\XX$. The Zariski structure
considered does not originally witness the $*$-structure on the
Heisenberg algebra, and so it produces, for $\mathbb{F}=\C,$
essentially a (non-classical) complex geometry. The assumption of
self-adjointness, in the canonical commutative context, leads to {\em
  cutting out the real part} of a complex variety.  The result of the
same operation with our structure $\CA$ is the discrete substructure,
(the finite cover of) the natural numbers $\mathbb{N}$. From a
physical standpoint, this leads to the quantization of the energy
levels of the Hamiltonian. Our Zariski geometry could therefore be
seen as the complexification of this discrete structure.

\section{The structures $\CA_N$ and $Q_{N}$} 

\begin{definition}
\label{defn:qho-struct}
 We consider the two-sorted theory $T_{N}$ with sorts $L$ and $\mathbb{F}$ in the language $\mathcal{L} = \mathcal{L}_{r} \cup \{\infty, \pi, \cdot, \A, \A^{\dagger}\}$ subject to the following axioms:

\begin{enumerate}
\item $\mathbb{F}$ is an algebraically closed field of characteristic $0$. 
\item $\mathbb{P}$ is the projective line over $\mathbb{F}$. 
\item $\pi: L \rightarrow \mathbb{P}$ is surjective. 
\item Let $\mu_{N}$ denote the group of $N$-th roots of unity in $\mathbb{F}$. We have a free and transitive group action $\cdot: \mu_N \times L \rightarrow L$ on each of the fibers $\pi^{-1}(x)$ for $x \in \mathbb{P}$. 
\item\label{item:phs-prod} The ternary relations $\A$, $\A^{\dagger}$ (on $L^{2} \times \mathbb{F}$) obey the following property:

\[ (\forall a \in \mathbb{F})(\forall e \in \pi^{-1}(a))(\exists b \in \mathbb{F})(\exists e^{'} \in \pi^{-1}(a +1))(b^{2} = a \wedge 
\A^{\dagger}(\gamma \cdot e, \gamma \cdot e^{'}, b) \wedge \A(\gamma \cdot e^{'}, \gamma \cdot e,b)) \] for every $\gamma \in \mu_N$.
\item For $N$ even, we postulate the following additional properties for 
$\A, \A^{\dagger}$: 

\[ \A^{\dagger}(e, e^{'}, b) \rightarrow \A(\gamma \cdot e, - \gamma \cdot e^{'}, -b)\]
\[ \A(e^{'}, e,b) \rightarrow \A^\dagger(\gamma \cdot e^{'}, - \gamma \cdot e, -b) \]
\end{enumerate}
\end{definition} The significance of the relations $\A,\A^{\dagger}$ will become apparent from Remark \ref{rem: well-def} below. Evidently the theory $T_{N}$ is first-order. We will denote models of $T_{N}$ by $\CA_N$. It is clear, by the freeness and transitivity of the action of $\mu_N$, that each fiber has size $N$. Thus any model $\CA_N$ is a finite cover of the projective line over some algebraically closed field $\mathbb{F}$. Fix a model $\CA_N$ and take the quotient of $L \times \mathbb{F}$ by the equivalence relation  
\[ (e,y) \sim (e',y') \Leftrightarrow (\exists \gamma \in \mu_N)(\gamma \cdot e = e^{'} 
\wedge \gamma^{-1}y = y^{'})\] We shall denote this bundle by $Q_{N}$. 

\begin{remark} Let $[(e,y)]$ denote the equivalence class of $(e,y) \in L \times \mathbb{F}$. The set $\mathcal{H}_{x} :=\{ [(e,y)]: \pi(e) = x\}$ is a definable $\mathbb{F}$-vector space of dimension $1$. Moreover, scalar multiplication is compatible with the action of $\mu_N$ on $L$. 
\end{remark}

\begin{proof} We abuse notation by denoting the equivalence class $[(e,y)]$ by $(e,y)$. Define
\[ \lambda(e,y):=(e,\lambda y) \qquad (e,y_{1}) + (e,y_{2}):=(e, y_{1} + y_{2}) \qquad y_{1}, y_{2} \in \mathbb{F}, x \in \mathbb{P}\] These are easily verified to be well-defined. Compatibility with the action of $\mu_N$ follows from the observation that $(\gamma \cdot e, y) = (e, \gamma y) = \gamma (e,y)$ for $\gamma \in \mu_N$.
\end{proof} Thus the universe of $Q_{N}$ consists of a trivial line bundle $\mathcal{H} = \bigcup_{x \in \mathbb{P}} \mathcal{H}_{x}$. We define linear maps $\CCC$, $\BB$ (induced from the relations $\A, \A^{\dagger}$) by
\[ \BB(e,1) := (e^{'},b) \qquad \CCC(e^{'},1) := (e,b) \] and extending linearly, where $\A^\dagger(e, e^{'},b)$ (and $\A(e',e,b)$) hold in the structure $\CA_N$. 

\begin{remark} \label{rem: well-def} The maps $\CCC, \BB$ are well-defined. 
\end{remark}

\begin{proof} Suppose we have that $\mathbf{a}^{\dagger}(e,1) = (e^{'}, b)$. 
Then for any $\gamma \in \mu_N$, it follows that 
\[ \mathbf{a}^{\dagger}(\gamma \cdot e, 1) = \gamma \mathbf{a}^{\dagger}(e,1) = \gamma(e^{'}, b) = 
(\gamma \cdot e^{'}, b) \] but we already have that $\A^{\dagger}(\gamma \cdot e, \gamma \cdot e, b)$ in the structure $\CA_N$. If $N$ is even then $-1 \in \mu_N$ and 
\[ \mathbf{a}^{\dagger}(-e, 1)= \mathbf{a}^{\dagger}(e,-1) = (e^{'}, -b) \] and $(-b)^{2} = a$. But we also have that $\A^{\dagger}(e,-e',-b)$ holds in $\CA_N$. Analogously for $\CCC$. 
\end{proof} 

\begin{lemma} \label{lemma:qho-Rdef} Let $\mathbb{F} = \mathbb{C}$. Then for each $N$, one can construct $\CA_N$ definable in $\mathbb{R}$. 
\end{lemma}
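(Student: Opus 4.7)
The objective is to exhibit a specific model $\CA_N$ whose two sorts, projection, fiber action and ternary relations are all first-order definable in the ordered field $\mathbb{R}$.

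First, I would interpret $\mathbb{F} = \mathbb{C}$ in $\mathbb{R}$ in the standard way (as $\mathbb{R}^2$ with polynomial field operations), and $\mathbb{P} = \mathbb{P}^1(\mathbb{C})$ as $\mathbb{C} \cup \{\infty\}$. For the covering sort I take the trivial cover $L := \mathbb{P} \times \mu_N$, with $\pi$ the projection to the first coordinate and $\mu_N$ acting by multiplication on the second. Since $\mu_N \subset \mathbb{C}$ is finite and hence $\mathbb{R}$-definable, so are $\pi$ and the action, and axioms 1--4 are satisfied by construction.

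The one non-algebraic ingredient that remains is a uniform choice of square root of a complex number, and this is precisely what o-minimality of $\mathbb{R}$ supplies: a quantifier-free semi-algebraic formula picks out, for each $a \in \mathbb{C}$, the unique $s(a) \in \mathbb{C}$ with $s(a)^2 = a$ and either $\mathrm{Re}(s(a)) > 0$, or $\mathrm{Re}(s(a)) = 0$ and $\mathrm{Im}(s(a)) \geq 0$. Using $s$, I then define
\[ \A^\dagger((a,\zeta),(a',\zeta'),b) \;\Leftrightarrow\; a' = a+1 \,\wedge\, \zeta' = \zeta \,\wedge\, b = s(a), \]
and $\A((a',\zeta'),(a,\zeta),b) \Leftrightarrow \A^\dagger((a,\zeta),(a',\zeta'),b)$; for $N$ even, I take the disjunction with the additional case $\zeta' = -\zeta$, $b = -s(a)$, so that axiom 6 becomes built in.

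Verifying axiom 5 is then immediate: for $a \in \mathbb{F}$ and $e = (a,\zeta)$ put $b := s(a)$ and $e' := (a+1,\zeta)$; for every $\gamma \in \mu_N$ we have $\gamma \cdot e = (a, \gamma\zeta)$ and $\gamma \cdot e' = (a+1, \gamma\zeta)$, and both ternary relations hold with this single $b$, which further satisfies $b^2 = a$. Axiom 6 (for even $N$) is a routine check from the symmetry $(\zeta', b) \mapsto (-\zeta', -b)$ that I deliberately inserted into the defining disjunction. The main point — rather than an obstacle — is simply to observe that a global branch of the complex square root, which is unavailable as a map definable in $\mathbb{C}$ qua algebraically closed field, becomes semi-algebraically definable as soon as one passes to the ordered real field; this is the single place where the o-minimal ambient structure does essential work.
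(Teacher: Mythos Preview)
Your proposal is correct and follows essentially the same strategy as the paper: interpret $\mathbb{C}$ in $\mathbb{R}$, take the trivial cover $L=\mathbb{P}^1(\mathbb{C})\times\mu_N$ with the obvious projection and $\mu_N$-action, and use an $\mathbb{R}$-definable branch of the complex square root to build the ternary relations. The only cosmetic difference is the shape of $\A^\dagger$: the paper declares $\A^\dagger((x,\alpha),(x+1,\beta),y)\iff y=\alpha\beta^{-1}x^{1/2}$, which makes the $\mu_N$-equivariance of axiom~5 and the sign symmetry of axiom~6 automatic, whereas you impose $\zeta'=\zeta$ (and, for even $N$, close under the involution $(\zeta',b)\mapsto(-\zeta',-b)$) by hand; both produce legitimate models, and the key non-algebraic input---the semi-algebraic choice of $\sqrt{\,\cdot\,}$---is the same.
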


\begin{proof} Consider $\C$ as $\R+i\R,$ definable in $\R.$
  Choose an $\R$-definable complex
function $x \mapsto x^{\frac{1}{2}}$ satisfying
$(x^{\frac{1}{2}})^2=x$. Define $\E$ to be $\mathbb{P}^{1}(\C) \times \mu_N$ and $\pi$
to be the projection on the first factor. Then one can define the
relation $\A^{\dagger}$ to hold on points $((x,\alpha), (x+1,\beta), y)$
if 
$$
y = \alpha\beta^{-1}x^{\frac{1}{2}}
$$
\noindent The relation $\mathbf{A}$ is definable in terms of $\A^{\dagger}$. These relations are readily seen to be definable in $\R$.
Let us also note that the same definition works for any real closed
field.
\end{proof}

\begin{proposition} \lb{prop1} The theory $T_N$ is consistent and, for even $N,$ is categorical in uncountable
cardinals. Moreover,
if $\mathbb{F}$ and $\mathbb{F}'$ correspond to the field sort in two models
$\CA$ and $\CA'$ of theory $T_N$ and there exists $i:\mathbb{F} \to \mathbb{F}',$  
a ring isomorphism, then $i$ can be extended to an isomorphism
$\hat i: \CA\to \CA'.$ In particular the only relations on $\mathbb{F}$ induced
from $\CA$ are the initial relations corresponding to the field structure.
\end{proposition}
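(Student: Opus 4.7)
Consistency of $T_N$ is immediate from Lemma~\ref{lemma:qho-Rdef}; the same construction (take $L = \mathbb{P}(\mathbb{F})\times\mu_N$, fix an arbitrary section of the squaring map on $\mathbb{F}$, and define $\A^\dagger$ as there) adapts verbatim to any algebraically closed field of characteristic zero. The extension claim is the heart of the proposition, and uncountable categoricity (for even $N$) is its immediate corollary: two models of $T_N$ of the same uncountable cardinality $\kappa$ have field sorts that are $\ACF_0$-structures of transcendence degree $\kappa$, hence admit a ring isomorphism $i$, which the extension claim promotes to a full structure isomorphism.

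\noindent Given a ring isomorphism $i\colon\mathbb{F}\to\mathbb{F}'$, extend first to the induced isomorphism $\mathbb{P}(\mathbb{F})\to\mathbb{P}(\mathbb{F}')$ and to the restriction $\mu_N\to\mu_N'$. To lift to $\hat i\colon L\to L'$, decompose $\mathbb{P}(\mathbb{F}) = \{\infty\}\sqcup\bigsqcup_{s\in S}(s+\mathbb{Z})$, where $S$ is a set of coset representatives for $\mathbb{Z}$ in $\mathbb{F}$. For each $s\in S\cup\{\infty\}$ choose lifts $e_s\in\pi^{-1}(s)$ and $e'_{i(s)}\in\pi'^{-1}(i(s))$, and declare $\hat i(e_s)=e'_{i(s)}$. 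Propagate along each $\mathbb{Z}$-orbit inductively: axiom~5 supplies a pair $(e_{s+n+1},b_n)$ with $b_n^2=s+n$ and $\A^\dagger(e_{s+n},e_{s+n+1},b_n)$, and analogously $(e'_{i(s)+n+1},b'_n)$ on the $\CA'$ side; we declare $\hat i(e_{s+n+1}) = e'_{i(s)+n+1}$ and demand $b'_n=i(b_n)$. Finally extend by $\mu_N$-equivariance, $\hat i(\gamma\cdot e):=i(\gamma)\cdot\hat i(e)$.

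\noindent The main obstacle is the sign-matching along each orbit: $b'_n$ and $i(b_n)$ are both square roots of $i(s+n)$, hence coincide up to sign. When they disagree, axiom~6 — available precisely because $N$ is even, so $-1\in\mu_N$ — lets us replace $e'_{i(s)+n+1}$ by its $(-1)$-translate, which simultaneously flips the sign of $b'_n$ and so restores compatibility. This is exactly the reason the categoricity statement is restricted to even $N$: for odd $N$ the sign cannot be absorbed into the $\mu_N$-action. Once $\hat i$ is built, preservation of $\pi$, of the $\mu_N$-action, and of $\A^\dagger,\A$ holds on the chosen lifts by construction and propagates to the rest by the equivariance built into axiom~5. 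The final assertion about induced relations on $\mathbb{F}$ follows from the extension statement applied with $\CA=\CA'$: every ring automorphism of $\mathbb{F}$ lifts to an automorphism of $\CA$, so any $\CA$-definable subset of $\mathbb{F}^n$ is invariant under all automorphisms of $\mathbb{F}$ and hence is definable in the pure field language.
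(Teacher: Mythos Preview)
Your proposal is correct and follows essentially the same approach as the paper's proof: decompose $\mathbb{P}$ into $\mathbb{Z}$-orbits, choose arbitrary lifts over a set of representatives, propagate inductively along each orbit using axiom~5, and absorb the sign ambiguity in the square root via $-1\in\mu_N$ (which is exactly where the hypothesis that $N$ is even enters). The only cosmetic difference is that the paper first reduces to $\mathbb{F}=\mathbb{F}'$ with $i=\mathrm{id}$ and fixes a single choice of square root $(s+n)^{1/2}$ used in both models, whereas you keep a general $i$ and phrase the sign correction as adjusting the lift on the $\CA'$ side to force $b'_n=i(b_n)$; these are equivalent formulations of the same argument.
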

\begin{proof} Consistency is immediate from Lemma \ref{lemma:qho-Rdef}. To prove categoricity, we may assume that $\mathbb{F}=\mathbb{F}'$ and $i$ is the identity. Partition $\Ll$ into the orbits of the action of
the additive subgroup $\Z\subs \mathbb{F}$:
$$\Ll=\bigcup_{s\in S}s+\Z$$
where $S$ is some choice of representatives, one for each orbit ($\infty+m=\infty$ for each $m\in \Z,$ so we have a one $1$-element orbit).
For each $s\in S$ choose first $e_s\in \E(\CA)$  and $e'_s\in \E(\CA').$
Now for  
 each $n\in \Z$ choose arbitrarily $(s+n)^{\frac{1}{2}}.$ By the axioms
there is $e\in \E\cap \pi\inv(s+1)$ such that $\A^{\dagger}(e_{s},e,\eps (s+1)^{\frac{1}{2}})$ holds for some $\eps\in \{ 1,-1\}.$  
Define $e_{s+1}:=\eps e$ which is in $\E$ since $N$ is even and
$\eps\in \mu_N.$ Then $\A^{\dagger}(e_{s},e_{s+1},(s+1)^{\frac{1}{2}})$ also holds by the axioms. Analogously define $e'_{s+1}\in \E(\CA').$ By induction 
we can define $e_{s+n}$ and $e'_{s+n}$ for all $n\ge 0$ so that $\A^{\dagger}(e_{s+n},e_{s+n+1},(s+n)^{\frac{1}{2}})$ (respectively $\A^{\dagger}(e'_{s+n}, e'_{s+n+1},(s+n)^{\frac{1}{2}})$) holds in $\CA$ (respectively $\CA'$). 

By a similar inductive procedure for all $n> 0$ define $e_{s-n}$ so that $\A(e_{s-n+1},e_{s-n},(s-n)^{\frac{1}{2}})$ holds in $\CA$ (with analogous relations for the second model). We then extend $i$ by mapping $e_{s+n} \mapsto e'_{s+n}$ for all $n \in \mathbb{Z}$. Repeating for each representative in $S$ defines an isomorphism $\CA \rightarrow \CA_N$ extending $i$. 
 \end{proof} We now discuss $\CA_N(\mathbb{C})$ under the additional assumption that $\PPP$ and $\QQ$ are self-adjoint. By virtue of the relation $\NN = \BB\CCC = \frac{1}{2}(\PPP^{2} + \QQ^{2}) - \frac{1}{2}$, one sees that $\NN$ is also self-adjoint; hence must have real non-negative eigenvalues. The only points in
$\Ll$ that survive this extra condition are the non-negative integers $\N.$ The 
corresponding points in $\E$ form the $N$-cover of $\N,$ so we get the discrete
structure
$\CA_N(\N)$ as the real part of $\CA_N(\C).$ Conversely,
the latter is the complexification of the former. We close this section with the following easy observation.

\begin{remark} \label{rem: bi-interp} $\CA_N$ and $Q_N$ are
  bi-interpretable. Indeed, elements of the form $[(e,1)]$ in
  $\mathcal{H}$ comprise the sort $\E$, $\A^{\dagger}((e,1),(e',1),b)$ holds
  if and only if $\BB[(e,1)] = [(e',b)]$. Similarly for $\A$.
\end{remark}

\section{The Weil-Ch\^atelet group}

In order to show that the structure $\CA_N$ is not definable in an
algebraically closed field, we need to recall some facts about
principal homogeneous spaces under a group $A$, their description in
terms of Galois cohomology, and the group structure on their
isomorphism classes in the case $A$ is Abelian.

\begin{definition}[Principal homogeneous space]
  Let $A$ be a group. A \emph{principal homogeneous space over $A$ (or
    torsor)} is a set $X$ with a free transitive action of $A$ on $X$.
\end{definition}

This notion can be of course internalised to various categories. We
will be interested in the following case: if $A$ is an algebraic group
defined over a field $k$, the set $A(K)$ of $K$-points for any Galois
extension $K/k$ is acted upon by the Galois group $\Gal(K/k)$. If $X$
is a variety defined over $k$ which is a acted upon freely and
transitively by $A$ then the set $X(K)$ of $K$-points of $X$ is a
principal homogeneous space over $A(K)$, and moreover the action of
$A$ is compatible with the action of $\Gal(K/k)$.

\begin{definition}[Principal homogeneous space over a $G$-group]
  Let $G$ be a group. A \emph{$G$-set} is a set endowed with an action
  of $G$. A \emph{$G$-group} $A$ is a group that is endowed with an
  action of $G$ which is compatible with the group operation: $g (a
  \cdot b) = ga \cdot gb$. A principal homogeneous space over $A$ is a
  $G$-set with a right action of $A$ which is free and transitive and
  is compatible with the action of $G$: $g (x \cdot a) = x \cdot ga$.

  A morphism between principal homogeneous spaces under the same
  $G$-group $A$ is a map that preserves the action by $A$.
\end{definition}

The principal homogeneous spaces over a $G$-group can be classified up
to  isomorphism using group cohomology (we refer to \cite{serre-galois}
for detailed exposition). 

\begin{definition}[Zeroth and first group cohomology groups]
  Let $G$ be a group and let $A$ be a $G$-group. The group $H^0(G,A)$
  is defined as the subgroup of $G$-invariant elements of $A$: $\{ a
  \in A \mid \forall g \in G\ ga=a \}$. 

  The maps $h: G \to A, \sigma \mapsto h_\sigma$ that satisfy the
  condition $h_{\sigma\tau} = h_\sigma \sigma(h_\tau)$ are called
  \emph{cocycles}. Two cocycles $h,h'$ are called \emph{cohomologous}
  if there exists an element $a \in A$ such that
  \mbox{$h_\sigma = \sigma(a) h_\sigma a^{-1}$}.  For a general $A$
  one defines the first group cohomology set $H^1(G,A)$ to be the set
  of cocycles modulo the equivalence relation of being
  cohomologous. In case $A$ is Abelian, the elementwise product of
  cocycles is a cocycle and they form an Abelian group, moreover, the
  set of cocycles cohomologous to the zero cocycle forms a subgroup,
  so $H^1(G,A)$ has the natural group structure.
\end{definition}

Given a principal homogeneous space $P$ one constructs a cocycle as
follows: choose a point $p \in P$ and act on it by an element $\sigma$
of the group $G$. Since the action of $A$ on $P$ is transitive and
free there is a unique element of $A$ that maps $p$ to $\sigma \cdot
p$, so let $h_\sigma$ be this element. Then $h$ is a cocycle. If one
takes another point $q \in P$ then one obtains a cocycle $h'$ of the
form $h'_\sigma = (\sigma \cdot b) h_\sigma a^{-1}$ where $b$ is the
element of $A$ such that $b p = q$.

This defines a map from the set of isomorphism classes of principal
homogeneous spaces over a $G$-group $A$ to $H^1(G,A)$. An inverse map
is straightforwardly defined: for a cocycle $h$ make $A$ into a
principal homogeneous space by twisting action of $G$, $\sigma * a =
h_\sigma (\sigma \cdot a)$ (the cocycle condition ensures that this
indeed defines an action).

When $A$ is an algebraic group, one is interested in recovering the
principal homogeneous space structure as an algebraic, not just
abstract, action of $A$.

\begin{proposition}
  Let $K/k$ be a finite Galois extension. Then the set of
  $k$-isomorphism classes of principal homogeneous $A$-spaces defined
  over $K$ is in bijective correspondence with elements of \linebreak
  $H^1(\Gal(K/k), A(K))$.
\end{proposition}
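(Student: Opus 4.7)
The plan is to specialise the abstract correspondence between principal homogeneous spaces over a $G$-group $A$ and $H^{1}(G,A)$ already sketched in the text, to the case $G = \Gal(K/k)$ acting naturally on $A(K)$. The only substantial content beyond the abstract bijection is Galois descent in the surjective direction.

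For the forward map I would argue as follows. Given a principal homogeneous $A$-space $X$ defined over $k$, the set $X(K)$ is a $\Gal(K/k)$-set on which $A(K)$ acts freely and transitively; the compatibility $\sigma(x\cdot a) = \sigma(x)\cdot \sigma(a)$ holds precisely because the action morphism $X \times_{k} A \to X$ is defined over $k$. Choosing a base point $p \in X(K)$ and defining $h_\sigma \in A(K)$ by $\sigma(p) = p \cdot h_\sigma$ produces a cocycle exactly as in the abstract case, and the class in $H^{1}$ is independent of $p$ by the cohomologous-cocycle computation already given. A $k$-morphism of PHSs sends any choice of base point to a point related by an element of $A(K)$, so $k$-isomorphic PHSs yield the same cohomology class.

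For the inverse map, starting from a cocycle $h$, I would twist the natural Galois action on $A_{K} := A \times_{k} K$ by the rule $\sigma *_{h} a := h_\sigma \cdot \sigma(a)$; the cocycle condition makes this a semilinear action of $\Gal(K/k)$ that commutes with right translation by $A(K)$. The main obstacle is effective Galois descent: one must show that this twisted semilinear action arises from a $k$-variety $X$ together with a $k$-isomorphism $X \times_{k} K \cong A_{K}$ identifying the twisted Galois action with the standard one on $X \times_{k} K$. For affine $A$ this follows from the equivalence between $k$-algebras and $K$-algebras equipped with a semilinear $\Gal(K/k)$-action (via the invariants functor applied to the twisted action on the coordinate ring); the general case reduces to the affine one by gluing, as developed in Chapter III of \cite{serre-galois}. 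Since right translation by $A$ commutes with the twisted Galois action, it descends to a free transitive $A$-action on $X$ defined over $k$, exhibiting $X$ as the desired PHS.

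Finally, mutual invertibility reduces to the abstract bijection already stated: taking $1 \in A(K)$ as the base point in the twisted PHS recovers the original cocycle $h$, while the identification $X(K) \cong A(K)$ sending a chosen $p$ to $1$ transports the Galois action on $X(K)$ to the cocycle-twisted action on $A(K)$, so the forward construction returns the class represented by $h$. Well-definedness on isomorphism classes in both directions is just the base-point-independence computation already performed in the abstract setting.
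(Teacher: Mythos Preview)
Your argument is correct and is in fact the standard textbook route (Galois descent via semilinear actions, as in Serre). The paper, however, constructs the inverse map differently: rather than twisting $A_{K}$ and invoking descent, it realises the PHS concretely as a quotient. Writing $K = k(\alpha)$ via a primitive element with minimal polynomial $f$, it lets $X$ be the zero-dimensional $k$-variety $\{f=0\}$, so that $X(K)$ is a free $\Gal(K/k)$-set; then it equips $A \times X$ with a diagonal-type action of $\Gal(K/k)$ built from the cocycle and takes the quotient variety $Y = (A \times X)/\Gal(K/k)$. The quotient exists as a quasi-projective $k$-variety because the group is finite, and one checks on points that $Y(K)$ is the abstract PHS attached to the cocycle.

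What each approach buys: your descent argument is cleaner conceptually and generalises immediately to arbitrary (e.g.\ non-finite, or fppf) base changes, but it relies on the descent formalism as a black box. The paper's quotient construction is entirely explicit and avoids appealing to descent theory, at the cost of using a primitive element and the existence of finite-group quotients of quasi-projective varieties; it also makes the later Weil--Ch\^atelet product construction (a similar quotient) feel more uniform with this one.
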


\begin{proof}
  If $K/k$ is a finite Galois extension then it has a primitive
  element, so that $K = k(\alpha)$. Let $f(x) \in k[x]$ be the minimal
  polynomial of $\alpha$ and let $X$ be the affine 0-dimensional
  variety defined by it. The $K$-points (or, which is the same,
  $k^{\alg}$-points) of $X$ are acted upon freely and transitively by
  $\Gal(K/k)$.  In order to construct a principal homogeneous space
  for $A$ corresponding to the cocycle $h$, pick a point $x_0 \in X$,
  consider the space $A \times X$ and the action of $\Gal(K/k)$ given
  by
$$
\sigma \cdot (a, \eta \cdot x_0) = (\eta(h_\sigma) \cdot a, \sigma
\eta \cdot x_0)
$$
\noindent It is a well-known fact that the quotient of a
quasi-projective variety by the finite group action is a
quasi-projective variety, therefore there exists a quotient of $A
\times X$ be the action we have just described, call it $Y$. One
checks that the $K$-points of $Y$ are an abstract principal
homogeneous space corresponding to the cocycle $h$. As an alegbraic
variety, $Y$ is invariant under the action of $\Gal(K/k)$, so is
defined over $k$.
\end{proof}

In case $A$ is Abelian, there is a group structure on the
$k$-isomorphism classes of principal homogeneous $A$-spaces defined
over $K$. This group is called \emph{Weil-Ch\^atelet group}, denoted
$WC(K/k,A)$.

The group law was defined by Ch\^atelet for elliptic curves and for
arbitrary Abelian varieties by Weil in \cite{weil-phs}.  Weil's
construction essentially uses the theorem on birational group laws
(also known as ``Weil's group chunk theorem''). In case $A$ is
0-dimensional, this technology in unnecessary and the construction
proceeds as follows.

Let $P_g, P_h$ be principal homogeneous spaces over the same Abelian
0-dimensional group $A$ that correspond to cocycles $g,h \in
H^1(\Gal(K/k), A(K))$. Consider the following action of $\Gal(K/k)$ on
$P_g \times P_h$:
$$
\sigma \cdot (a, b) = (\sigma \cdot a, \sigma^{-1} \cdot b)
$$
\noindent and let $Z$ be the quotient under this action. Points of $Z$
correspond to orbits of the action. Define the action of $\Gal(K/k)$
on $Z$: let the orbit with a representative $(a,b)$ be mapped by
$\sigma$ to the orbit with the representative $(\sigma \cdot a,
b)$. Since $A$ is Abelian, this is well defined: $(\sigma \cdot a, b)$
is equivalent to $(\sigma \cdot (\eta \cdot a), \eta \cdot b)$ for any
$\eta \in \Gal(K/k)$.

\begin{proposition}
\label{prop:wc}
  The Weil-Ch\^atelet group $WC(K/k,A)$ is isomorphic to $H^1(\Gal(K/k),
  A(K))$.
\end{proposition}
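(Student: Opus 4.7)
The plan is to verify that the bijection from the preceding proposition --- between isomorphism classes of principal homogeneous $A$-spaces defined over $k$ and classes in $H^1(\Gal(K/k), A(K))$ --- is in fact a group homomorphism with respect to the Weil-Ch\^atelet law on the left and pointwise multiplication of cocycles on the right. Since the underlying bijection has already been produced, the entire content lies in matching the two multiplications.

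I would proceed by fixing cocycles $g, h$ with realising principal homogeneous spaces $P_g, P_h$ and basepoints $p \in P_g$, $q \in P_h$ so that $\sigma \cdot p = p \cdot g_\sigma$ and $\sigma \cdot q = q \cdot h_\sigma$ in the original untwisted Galois actions. Form the quotient $Z$ defined above and take $[(p,q)] \in Z$ as a basepoint. Applying the twisted Galois action on $Z$ to this point gives, directly from the formula, $\sigma \cdot [(p,q)] = [(\sigma \cdot p, q)] = [(p \cdot g_\sigma, q)]$. Using the equivalence relation defining $Z$, this class can then be rewritten so as to display it as $[(p,q)]$ shifted by the natural $A$-action through the element $g_\sigma h_\sigma$; it is precisely at this step that the commutativity of $A$ is invoked, in exactly the way flagged in the construction. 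One thus reads off the cocycle of $Z$ as $g \cdot h$.

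The last routine step is to check that the cocycle so obtained is independent of the choice of basepoints modulo coboundaries: replacing $p$ by $p \cdot \alpha$ and $q$ by $q \cdot \beta$ alters it by $\sigma \mapsto \sigma(\alpha\beta) \cdot (\alpha\beta)^{-1}$, a coboundary. The main obstacle I anticipate is the preliminary verification that the natural right $A$-action on $Z$ is well-defined on orbits, and that it is free and transitive; this is the genuine use of commutativity of $A$, without which $Z$ fails to be a principal homogeneous $A$-space at all. Once this is secured, the rest is formal: the quotient inherits a $k$-structure by the same quasi-projective-quotient argument used in the preceding proposition, so the map on isomorphism classes is a group homomorphism, and being a bijection, an isomorphism.
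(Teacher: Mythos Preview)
Your proposal is correct and follows essentially the same route as the paper: both argue that the already-established bijection respects products by choosing basepoints $x\in P_g$, $y\in P_h$, computing the cocycle attached to the image of $(x,y)$ in the product space $Z$, and reading off $g_\sigma h_\sigma$. The paper's proof is terser---it records only the computation $\sigma\cdot(x,y)=(g_\sigma x,h_\sigma y)\sim(g_\sigma h_\sigma x,y)$ and stops---whereas you additionally flag the basepoint-independence check and the verification that the $A$-action on $Z$ is well-defined, free, and transitive; these are exactly the routine points the paper leaves implicit.
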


\begin{proof}
  We need to ensure that the product of principal homogeneous spaces
  corresponds to the product of cocycles. 

  Let $P_g, P_h$ be principal homogeneous spaces corresponding to
  cocycles $g, h$ respectively and let $x \in P_g(K), y \in P_h(K)$ be
  such that for all $\sigma \in \Gal(K/k)$
$$
\begin{array}{l}
g_\sigma = a \textrm{ such that } a x = \sigma x\\
h_\sigma = a \textrm{ such that } a y = \sigma y
\end{array}
$$
Consider the element of $P_{g \cdot h}$ which is the image of
$(x,y)$. Then $(g \cdot h)_\sigma = g_\sigma \cdot h_\sigma$ since
$\sigma \cdot (x,y) = (g_\sigma x, h_\sigma y)$ and $(g_\sigma x,
h_\sigma y) \sim (g_\sigma h_\sigma x, y)$.
\end{proof}

Last remark we are going to make is that everything that has been said
above generalizes to non-finite Galois extensions and absolute Galois
groups, once group cohomology is replaced with profinite group
cohomology.

\begin{definition}[Group cohomology for profinite groups]
  A \emph{profinite group} is a topological group which is an inverse limit
  of finite discrete groups.

  Let $G$ be a profinite group and let $A$ be a topological $G$-group,
  i.e. a group endowed with a continuous action of $G$. The first
  cohmology set $H^1(G,A)$ is defined to be the set of all
  \emph{continuous} cocycles $h: G \to A, h \mapsto h_\sigma$ modulo
  the equivalence relation of being cohomological. In case $A$ is
  discrete this amounts to the requirement that the cocycle map
  factors through a finite quotient of $G$.

  The definition of the zeroth cohomology group is the same as in the
  case when $G$ is finite.
\end{definition}

\begin{proposition}[\cite{serre-galois}, Ch.I, \S 2, Proposition~8]
  Let $G$ be profinite and $A$ be a discrete $G$-group. Then
 $$
H^1(G,A) = \underleftarrow\lim H^1(G/U,A^U)
$$
\noindent where $U$ runs through all open normal subgroups and $A^U$
is the subgroup of $A$ fixed by the action of $U$.
\end{proposition}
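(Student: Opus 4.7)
The plan is to exhibit the natural inflation maps $\mathrm{inf}_U\colon H^1(G/U,A^U)\to H^1(G,A)$ indexed by open normal subgroups $U\le G$, verify their compatibility as $U$ varies, and show that the resulting map between the limit and $H^1(G,A)$ is a bijection. What really needs to be proved is that every continuous cocycle on $G$ is inflated from a cocycle on a finite quotient with values in the appropriate fixed submodule, and that cohomology of such cocycles is detected at a finite level.

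The heart of the argument is a factorisation lemma. Let $h\colon G\to A$ be a continuous cocycle. Since $A$ is discrete, $h^{-1}(1)\subset G$ is an open neighbourhood of the identity; as $G$ is profinite, this neighbourhood contains an open normal subgroup $U$. I claim $h$ factors through $G/U$ with values in $A^U$. For $\tau\in U$ the cocycle identity $h_{\sigma\tau}=h_\sigma\cdot\sigma(h_\tau)=h_\sigma$ shows $h$ is constant on left cosets of $U$; applying the identity in the opposite order yields $h_{\tau\sigma}=\tau(h_\sigma)$, and the normality of $U$ gives $\tau\sigma\in\sigma U$, so $\tau(h_\sigma)=h_{\tau\sigma}=h_\sigma$ for all $\tau\in U$, i.e.\ $h_\sigma\in A^U$.

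Surjectivity of the map from the limit is now immediate: any class $[h]\in H^1(G,A)$ has a representative of the form just described, hence corresponds to an element of $H^1(G/U,A^U)$. For injectivity, suppose cocycles $h_1,h_2$ both factor through the same $G/U$ with values in $A^U$ and become cohomologous in $H^1(G,A)$ via some $a\in A$, i.e.\ $(h_2)_\sigma=\sigma(a)(h_1)_\sigma a^{-1}$. Because the $G$-action on the discrete module $A$ is continuous, the stabiliser of $a$ is open and thus contains some open normal $V$; replacing $U$ by $U\cap V$ we may assume $a\in A^U$, so $h_1$ and $h_2$ are already cohomologous inside $H^1(G/U,A^U)$. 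Compatibility with the inflation transition maps of the system is then a formal verification.

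The main subtlety, more bookkeeping than obstacle, is that several openness conditions --- that $U$ trivialise $h$, be normal, and absorb any given coboundary element $a$ --- must be imposed simultaneously; this is handled uniformly by descending to a common open normal subgroup contained in their intersection, which exists because $G$ is profinite. The non-abelian character of $A$ creates no difficulty here, since $H^1$ is treated throughout as a pointed set rather than a group, and the entire argument only requires that the inflation maps be set-theoretic and respect the distinguished base point.
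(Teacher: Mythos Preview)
The paper does not give its own proof of this proposition; it is simply quoted from Serre's \emph{Cohomologie Galoisienne} as background. Your argument is correct and is essentially the standard proof one finds there: continuity of a cocycle into a discrete module forces it to factor through a finite quotient $G/U$ with values in $A^U$, which gives surjectivity of the canonical map from the colimit, and any coboundary relation descends to a finite level after shrinking $U$ into the (open) stabiliser of the witnessing element $a$, which gives injectivity.

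One cosmetic point worth flagging: the limit here is a \emph{direct} limit along the inflation maps $H^1(G/U,A^U)\to H^1(G/U',A^{U'})$ for $U'\subset U$, so the symbol $\underleftarrow\lim$ in the paper's statement is a slip for $\varinjlim$. Your proof is, correctly, a proof of the direct-limit statement, and your handling of the non-abelian case as pointed sets is appropriate.
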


One sees easily that the group $H^1(\Gal(k^{\sep}/k),
A(k^{\mathrm{sep}}))$ classifies principal homogeneous subspaces over
$A$ defined over any Galois extension of $k$.

\section{Non-definability of $\CA_N$ in the theory $\ACF_0$}

Let $M$ be a structure in a language $\mathcal{L}$ and $N$
be a structure in a language $\mathcal{L}'$. An \emph{interpretation}
of $M$ in $N$ is a structure with the universe being a definable set
(further denoted $M(N)$) in $N^{\eq}$ and such that the predicates of
$\mathcal{L}$ are definable relations in $N^{\eq}$ (further denoted
$P(N)$ where $P$ is a predicate of $\mathcal{L}$) such that $M(N)$ is
isomorphic to $M$ as an $\mathcal{L}$-structure. The notation $M(N)$
will be used to denote both the definable set and the structure with
such universe. If $X$ is definable set in $M^k$ then the image of it
under the isomorphism is denoted $X(N)$.

 If moreover $M(N)$ is a definable subset of $N^k$ for some $k$, one
 says that $M$ is \emph{defined} in $N$.

 \begin{remark}
   We adopt a similar notation for realisations of definable sets. If
   $X$ is a definable set and $M$ is a model then $X(M)$ will mean the
   set of $M$-tuples that belong to the realisation of $X$ in $M$.
 \end{remark}

 Let $(F,+,\cdot,0,1)$ be an infinite field and let $K$ be an
 algebraically closed field. Let $F$ be interpreted (or rather
 defined, since $ACF_p$ has elimination of imaginaries) in $K$. Then
 $F(K)$ in a definable set in the field $K$ with definable field
 operations.

\begin{theorem}[\cite{pillay-acf}, Theorem
  4.13; \cite{poizat-groups},Theorem 4.15 ]
\label{thm:field} 
There is a bijection between $F(K)$ and $K$, definable in $K$, which
is a field isomorphism.
\end{theorem}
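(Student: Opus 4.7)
The plan is to establish that any infinite field $F$ defined in an algebraically closed field $K$ must be, up to a $K$-definable field isomorphism, $K$ itself. I would organize the argument into three stages: reduce $F(K)$ to constructible data, promote the definable field operations to regular (algebraic) ones via group-chunk theorems, and then identify the resulting algebraic field with $K$.

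First, since $\ACF_0$ eliminates both quantifiers and imaginaries, $F(K)$ is in $K$-definable bijection with a constructible subset of some $K^n$, and the operations $+_F, \cdot_F$ are given by constructible (piecewise rational) maps. Because $F$ is infinite, strong minimality of $K$ gives $F(K)$ a well-defined Morley rank $d \geq 1$; by discarding a proper definable subset of strictly smaller rank, I may assume $F(K)$ is Zariski-dense in an irreducible quasi-projective variety $V \subseteq K^n$ of dimension $d$, with $+_F$ and $\cdot_F$ given by rational maps on $V$.

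Second, apply Weil's theorem on birational group laws (in the form adapted by Hrushovski for groups definable in stable theories) to the additive structure $(F(K), +_F)$, yielding a connected commutative algebraic group $G_{+}$ over $K$ together with a $K$-definable isomorphism from a cofinite part of $F(K)$ to a cofinite part of $G_+(K)$. Do the same for the multiplicative structure $(F(K)^{\ast}, \cdot_F)$ to obtain an algebraic group $G_{\times}$. Extending by translations, these definable isomorphisms can be made globally defined on $F(K)$ (not just cofinitely), and they carry the constructible field operations to regular maps on the algebraic variety $G_{+}$.

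Third, the distributive law relating $+_F$ and $\cdot_F$ now transfers to a compatibility between $G_+$ and $G_\times$, making $F$ an algebraic field object on $V$ over $K$. The classification of connected commutative algebraic groups over $K$ forces $G_+ \cong \mathbb{G}_a^d$ (since a group admitting a compatible multiplication cannot have an abelian variety or torus factor), and so $F$ is a $d$-dimensional algebraic field extension of $K$; as $K$ is algebraically closed, this forces $d = 1$ and $F \cong K$ as algebraic fields, with the isomorphism given by a regular map and hence $K$-definable.

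The main obstacle is the second stage: properly invoking Weil's group-chunk theorem to upgrade merely definable group operations to regular morphisms on an algebraic group, and ensuring the resulting definable bijection between $F(K)$ and $G_+(K)$ lands in a $K$-definable target. Once this upgrade is in place, the identification with $K$ follows essentially automatically from the classification of commutative algebraic groups and the fact that $K$ is algebraically closed.
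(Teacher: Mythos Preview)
The paper does not give its own proof of this theorem; it is quoted from external sources (Pillay's and Poizat's books) and used as a black box. So there is no in-paper argument to compare your proposal against.

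That said, your outline is essentially the proof one finds in those references, and the strategy is sound. One point in stage three deserves sharpening: the claim that $G_+$ can have no abelian-variety or torus factor is correct, but the justification is not simply that ``a compatible multiplication'' exists. The standard argument is that each nonzero $a \in F$ acts on $(G_+,+)$ as a definable---hence algebraic---group automorphism, so $F^\times$ sits inside $\mathrm{Aut}(G_+)$ as a connected positive-dimensional algebraic subgroup; since the automorphism group of a torus or of an abelian variety is discrete (zero-dimensional as an algebraic group), $G_+$ is forced to be a vector group $\mathbb{G}_a^d$. Once that is in place, your conclusion that $F$ is a $d$-dimensional field extension of the algebraically closed field $K$, hence $d=1$ and $F \cong K$ definably, goes through. (Incidentally, applying the group-chunk theorem separately to $\cdot_F$ is harmless but unnecessary: once $G_+$ is an algebraic group, multiplication is a definable map between varieties and is therefore a morphism on a dense open set, which is all one needs.)
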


\begin{theorem}[Kummer theory, \cite{birch-cycl}]
\label{thm:kummer}
Let $k$ be a perfect field and let $n$ be an integer that does not
divide the characteristic of $k$. Suppose $k$ contains $n$-th roots of
unity. Then
$$
H^1(\Gal(k^{\sep}/k), \mu_n) \cong \Hom(\Gal(k^{\sep}/k), \mu_n) \cong
k^\times/(k^\times)^n
$$
\end{theorem}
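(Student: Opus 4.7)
The plan is to establish the two isomorphisms separately. For the first, I would use the hypothesis that $k$ contains $\mu_n$: this forces the absolute Galois group $G := \Gal(k^{\sep}/k)$ to act trivially on $\mu_n$. Given a continuous cocycle $h: G \to \mu_n$, the cocycle condition $h_{\sigma\tau} = h_\sigma \sigma(h_\tau)$ degenerates to $h_{\sigma\tau} = h_\sigma h_\tau$, so $h$ is a continuous homomorphism; and two cocycles $h, h'$ are cohomologous iff there is $a \in \mu_n$ with $h'_\sigma = \sigma(a) h_\sigma a^{-1}$, which (trivial action plus commutativity of $\mu_n$) forces $h' = h$. Thus every cohomology class is a singleton, yielding $H^1(G, \mu_n) \cong \Hom(G, \mu_n)$.

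For the second isomorphism I would apply the long exact cohomology sequence to the \emph{Kummer sequence} of $G$-modules
$$ 1 \to \mu_n \to (k^{\sep})^\times \xrightarrow{x \mapsto x^n} (k^{\sep})^\times \to 1. $$
Surjectivity on the right reduces to solvability of $x^n = a$ in $k^{\sep}$ for each $a \in (k^{\sep})^\times$, which holds precisely because $n$ is coprime to $\mathrm{char}(k)$, so the polynomial $x^n - a$ is separable. The perfectness assumption on $k$ identifies the $G$-fixed points of $(k^{\sep})^\times$ with $k^\times$, and the long exact sequence opens as
$$ k^\times \xrightarrow{x \mapsto x^n} k^\times \to H^1(G, \mu_n) \to H^1(G, (k^{\sep})^\times). $$

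The critical input, and what I would regard as the main obstacle, is \emph{Hilbert's Theorem 90}, which supplies the vanishing $H^1(G, (k^{\sep})^\times) = 0$. This is the sole non-formal ingredient in the argument; for finite Galois extensions it is classical, and one passes to the profinite case by the direct-limit description of $H^1$ reviewed earlier in the paper. Granted this vanishing, the long exact sequence terminates and produces an isomorphism $k^\times/(k^\times)^n \cong H^1(G, \mu_n)$; composing with the first isomorphism yields the claimed identification with $\Hom(G, \mu_n)$.
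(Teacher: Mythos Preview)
The paper does not prove this theorem: it is stated with a citation to \cite{birch-cycl} and used as a black box. Your argument is the standard proof of Kummer theory and is correct. One small remark: the identification of the $G$-fixed points of $(k^{\sep})^\times$ with $k^\times$ does not require perfectness of $k$; it is immediate from the Galois correspondence for the separable closure. The perfectness hypothesis is only there to make the statement cleaner (so that $k^{\sep}=k^{\alg}$), not to justify that step.
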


\begin{remark}
\label{rem:kummer-funct}
The correspondence $G \mapsto H^1(G,A)$ where $A$ is a trivial module
is actually a functor, i.e. to any homomorphism \mbox{$f: G_1 \to
  G_2$} corresponds a homomorphism \linebreak \mbox{$H^1(f):
  H^1(G_2,A) \to H^1(G_1, A)$}. In particular, given an automorphism
$\sigma: k \to k$, one can lift it to an automorphism of the separable
closure. The conjugation by $\sigma$ gives an automorphism
$\bar\sigma: \Gal(k^{\sep}/k) \to \Gal(k^{\sep}/k)$. One can check
that in the Kummer theory setting the induced map on cohomology is
$$
H^1(\bar\sigma): k^\times/(k^\times)^n \to k^\times/(k^\times)^n \ \
\ \ x \mapsto \sigma(x)
$$
\end{remark}

\begin{theorem}
  \label{thm:not-defn-acf}
  $\CA_N$ is not definable in an algebraically closed field.
\end{theorem}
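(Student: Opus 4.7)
The plan is to derive a contradiction from the assumption that $\CA_N$ is definable in an algebraically closed field $K$ by extracting a $K$-definable function $\mu\colon \mathbb{A}^1\to K$ satisfying $\mu(a)^2 = c\,a$ for some nonzero constant $c$, which cannot exist in a model of $\ACF_0$. By Theorem~\ref{thm:field} we may identify $\mathbb{F}$ with $K$ via a $K$-definable field isomorphism; elimination of imaginaries in $\ACF_0$ then realises $\pi\colon L\to\Ll$ as a constructible finite \'etale cover of $\mathbb{P}^1_K$ of degree $N$ with free, transitive $\mu_N$-action on each fiber, i.e.\ as a $\mu_N$-torsor in the \'etale topology. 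The Kummer short exact sequence over $\mathbb{P}^1_K$, combined with $\mathrm{Pic}(\mathbb{P}^1)=\mathbb{Z}$ (torsion free) and $(K^\times)^N=K^\times$, gives $H^1(\mathbb{P}^1_K,\mu_N)=0$, so the torsor is trivial: there is a $K$-definable isomorphism $L\cong\mu_N\times\Ll$, which induces a $K$-definable trivialization of the quotient line bundle $\mathcal{H}$ via $[((\zeta,a),v)]_{\mu_N}\mapsto\zeta v$.

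In this trivialization, the unit vector $1\in\mathcal{H}_a$ corresponds to $[((1,a),1)]_{\mu_N}$, and the operator $\BB$ of Remark~\ref{rem: well-def} acts as multiplication by the well-defined $K$-definable function $\mu(a):=\zeta' b$, where $(\zeta',b)$ is any witness to $\A^{\dagger}((1,a),(\zeta',a+1),b)$ with $b^2=a$; well-definedness is exactly the content of Remark~\ref{rem: well-def}, the $N$-even axioms ensuring that any two witnesses lie in the same $\mu_N$-equivalence class. Squaring yields $\mu(a)^2=(\zeta')^2 a$, and since $N$ is even, $(\zeta')^2\in\mu_N^2=\mu_{N/2}$. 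Thus $\mu(a)^2/a$ is a $K$-definable map $\mathbb{A}^1\setminus\{0\}\to\mu_{N/2}$; since the domain is Zariski-irreducible and the target a finite discrete set, this map must be constant, equal to some $c\in\mu_{N/2}\subseteq K^\times$. Hence $\mu(a)^2=c\,a$ identically on $\mathbb{A}^1$.

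The contradiction is immediate: a $K$-definable (hence constructible) function $\mu$ with $\mu(a)^2=c\,a$ has graph a constructible subset of the irreducible parabola $\{(a,v):v^2=c\,a\}$ projecting bijectively onto $\mathbb{A}^1$, but constructible subsets of an irreducible one-dimensional variety are either cofinite (and hence project generically two-to-one) or finite (and hence fail to surject). Equivalently, no rational function $\mu\in K(a)$ can satisfy the degree equation $2\deg\mu=\deg(c\,a)=1$. Therefore $\CA_N$ is not definable in $K$. The main obstacle I anticipate is justifying cleanly, in the definable setting, that the $N$-even axioms suffice for the well-definedness of $\mu$ (and thus of $\BB$); once this is secured, the degree/constructibility obstruction closes the argument mechanically.
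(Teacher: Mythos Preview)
Your argument has a genuine gap at the trivialization step. You invoke $H^1_{\text{\'et}}(\mathbb{P}^1_K,\mu_N)=0$ to conclude that $L\to\mathbb{P}^1$ is trivial, but this cohomology group classifies \emph{\'etale} $\mu_N$-torsors. Elimination of imaginaries only tells you that $L$ is a \emph{constructible} set with a constructible projection and $\mu_N$-action; a constructible torsor with $N$-point fibres everywhere need not be (or be definably isomorphic to) an \'etale torsor over all of $\mathbb{P}^1$. Concretely, over $\mathbb{A}^1$ take $L'=\{(a,b):b^2=a,\ a\neq 0\}\cup\{(0,1),(0,-1)\}$ with the uniform action $(-1)\cdot(a,b)=(a,-b)$. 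This is a definable $\mu_2$-torsor with $2$-point fibres over every $a$, yet it admits no definable section (such a section would be a definable square-root on $\mathbb{A}^1\setminus\{0\}$), hence is not definably trivial. Its restriction to $\mathbb{G}_m$ is the nontrivial \'etale torsor of class $a\in H^1(\mathbb{G}_m,\mu_2)\cong\mathbb{Z}/2$; the constructible filling-in over $0$ does not make it \'etale over $\mathbb{A}^1$, so the vanishing of $H^1(\mathbb{P}^1,\mu_N)$ is simply not applicable. Without the trivialization you cannot form your function $\mu$, and the rest of the argument does not start.

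The paper's proof never asserts triviality. It passes to a generic $x$ over the parameter field $\kk$ and regards the single fibre $L_x$ as a $\mu_N$-torsor over the function field $k=\kk(x)$; Kummer theory then assigns it a class $f(x)\in k^\times/(k^\times)^N$ (which may well be nontrivial). Functoriality gives $L_{x+1}$ the class $f(x+1)$, and axiom~5 together with the Weil--Ch\^atelet product forces $f(x)/f(x+1)\equiv x^{N/2}\pmod{(k^\times)^N}$; a degree-mod-$N$ count shows this is impossible. Your contradiction $\mu(a)^2=ca$ is exactly the special case $f\equiv 1$ of this functional equation, but reaching that special case requires the very trivialization you have not justified; and justifying it would amount to analysing the class of $L_x$ over $\kk(x)$, i.e.\ to carrying out the paper's argument.
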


\begin{proof} 
  Suppose that $\CA_N$ is definable in a field $K$ over a set of
  parameters $\kk$ which we can assume without lack of generality to
  be a subfield.  By Theorem~\ref{thm:field}, $\mathbb{F}(K)$ is
  definably isomorphic to $K$ as a field, so may just as well suppose
  that $\mathbb{F}(K)$ is interpreted as $\mathbb{A}^1_K$ with field
  operations given by that of $K$.

  Let $x$ be a generic point of the affine line $\mathbb{A}^1_K$ over
  $\kk$. Let $K'$ be an algebraically closed field that contains $x$ and
  $\kk$ (for example, one can take the algebraic closure of $\{x\} \cup
  \kk$ in the monster model).

  By definition of the structure $\CA_N$, for any $a \in K'$, the set
  $L_a=p^{-1}(a)$ is the principal homogeneous space over the group of
  $N$-th roots of unity, $\mu_N$.  The definable set $L_x$ is a
  principal homogeneous space over $\mu_N$ defined over $k=\kk(x)$
  (since $x$ is generic and hence transcendental over $k$). By
  Theorem~\ref{thm:kummer} a cocycle class in $H^1(\Gal(k^{\sep}/k),
  \mu_N) \cong k^\times/(k^\times)^N$ that describes $L_a$ is
  represented by some rational function $f(x) \in \kk(x)$ which
  without loss of generality can be assumed to be a polynomial since
  every coset of $(\kk(x)^\times)^N$ in $\kk(x)^\times$ contains one.

  By Remark~\ref{rem:kummer-funct}, a cocycle class corresponding to
  $L_{x+1}$ is $\sigma(f)$ where $\sigma: k \to k$ is the
  automorphism that sends $x$ to $x+1$, i.e. the corresponding
  polynomial is $f(x+1)$.

  Now notice that by definition of $\mathbf{A}^\dagger$
  (clause~\ref{item:phs-prod} in Definition~\ref{defn:qho-struct}),
  the principal homogeneous space $\sqrt{x} \mu_N(K')$ is the product
  of the principal homogeneous space $L_x(K')$ and the principal
  homogeneous space opposite to $L_{x+1}$. Then by
  Proposition~\ref{prop:wc} we have
$$
\frac{f(x)}{f(x+1)} = x^{N/2} \textrm{ mod } (k^\times)^N
$$
  \noindent Define the homomorphism $\mathrm{deg_N}:
  k^\times/(k^\times)^N \to \mathbb{Z}/N\mathbb{Z}$ to be the degree
  of the nominator minus the degree of denominator modulo
  $N$. Clearly, $\mathrm{deg}_N(\frac{f(x)}{f(x+1)}) = 0$ for any
  $f(x)$ and $\mathrm{deg}_N(x^{N/2})=N/2$, therefore the equality
  cannot hold  and we have come to a contradiction.   
\end{proof}
        
It is natural to ask  whether $\CA_N$ is also definable in the
structure of compact complex spaces (\cite{z-ccm}, \cite{pillay-ccm}).
We denote by $\mathcal A$ the multi-sorted structure where each sort
is the set of points of a compact complex space, and the language
consists of predicates corresponding to all complex analytic
subvarieties of all possible finite products of sorts. Let us pick
some very saturated elementary extension of $\mathcal A$ and denote it
$\mathcal A'$. We refer to \cite{moosa-nonstandard} for some basic
facts about this structure and a dictionary between model-theoretic
and complex geometric notions.

\begin{theorem}[\cite{moosa-nonstandard}]
\label{thm:ccm-field}   
  Let $F$ be an algebraically closed field of characteristic 0. If $F$
  is definable in $\mathcal A'$ then $F(\mathcal A')$ is definably
  isomorphic to $\mathbb{C}(\mathcal A')$.
\end{theorem}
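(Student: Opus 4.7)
The plan is to apply the classification of definable groups and fields in the $\omega$-stable, finite-Morley-rank theory of compact complex spaces. The strategy mirrors the classical fact, due essentially to Poizat, that any infinite field definable in an algebraically closed field $K$ is definably isomorphic to $K$ itself; here that principle is lifted to the compact complex manifold setting, where the canonical algebraically closed field is $\mathbb{C}$.

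First, I would observe that the universe $F(\mathcal{A}')$ with its definable field operations is a definable set of finite Morley rank, so in particular the additive group $(F(\mathcal{A}'),+)$ is an infinite connected definable group. By the classification of groups definable in compact complex manifolds --- the theorem that every connected definable group in CCM is a \emph{meromorphic group}, i.e.\ definably isomorphic to the nonstandard points of a complex Lie group whose multiplication extends meromorphically to a compactification --- this additive group is already realised as such a meromorphic object.

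Second, the multiplicative group $F(\mathcal{A}')^\times$ is itself a definable connected group of finite Morley rank and hence, by the same classification, transports into the meromorphic category. The distributive law and the other field axioms, being definable conditions, persist under these identifications, so we obtain an algebraically closed meromorphic field structure carried on a standard compact complex variety. Combining Chow's theorem with the classification of one-dimensional commutative complex Lie groups that admit a compatible multiplication then forces this meromorphic field to be $\mathbb{C}$ itself, with its compactification being the Riemann sphere $\mathbb{P}^1(\mathbb{C})$ and its point at infinity removed. This yields the desired definable isomorphism $F(\mathcal{A}') \cong \mathbb{C}(\mathcal{A}')$.

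The main obstacle is the analytic bootstrapping: passing from an abstract definable field in $\mathcal{A}'$ to a genuine meromorphic object on a standard compact complex variety, and then invoking the uniqueness of $\mathbb{P}^1$ among compact algebraically closed ``meromorphic fields''. Both steps rely on substantial machinery --- notably the compactification theory for meromorphic groups and the fine structure theory of commutative complex Lie groups --- which is developed in detail in \cite{moosa-nonstandard} and which I would simply invoke rather than re-derive.
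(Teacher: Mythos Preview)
The paper does not give a proof of this statement: it is quoted verbatim from \cite{moosa-nonstandard} and used as a black box in the proof of Theorem~\ref{thm:not-defn-ccm}. There is therefore nothing in the paper for your proposal to be compared against.

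That said, your outline is a reasonable sketch of the argument behind the cited result. The actual proof in the literature does proceed by first showing that any definable group in the theory of compact complex spaces is definably isomorphic to a meromorphic group (Pillay--Scanlon, with refinements by Moosa), and then arguing that a meromorphic field structure forces algebraicity via Chow-type considerations, whence definable isomorphism with $\mathbb{C}(\mathcal A')$. Your final step is somewhat compressed: the passage from ``meromorphic field'' to ``$\mathbb{P}^1$ minus a point with its usual operations'' is not merely a classification of one-dimensional commutative Lie groups, but requires controlling the multiplicative structure as well and appealing to the algebraicity of the resulting variety. If you intend to reproduce a proof rather than cite one, that step deserves more care; otherwise, citing \cite{moosa-nonstandard} as the paper does is entirely appropriate.
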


\begin{theorem}
  \label{thm:not-defn-ccm}
$\CA_N$ is not definable in the structure $\mathcal A'$.
\end{theorem}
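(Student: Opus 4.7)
The plan is to mirror the proof of Theorem~\ref{thm:not-defn-acf}, substituting Theorem~\ref{thm:ccm-field} for Theorem~\ref{thm:field} at the outset. Assume for contradiction that $\CA_N$ is definable in $\mathcal{A}'$ over a set of parameters, which without loss of generality we may take to be a subfield $\kk$ of the field sort $\mathbb{F}(\mathcal{A}')$. By Theorem~\ref{thm:ccm-field}, $\mathbb{F}(\mathcal{A}')$ is definably isomorphic to $\mathbb{C}(\mathcal{A}')$ as a field; after this identification we may assume the field sort of $\CA_N$ is $\mathbb{C}(\mathcal{A}')$ with the usual field operations (playing the role of the algebraically closed $K$ in Theorem~\ref{thm:not-defn-acf}).

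With this reduction in place, the Kummer/Weil-Ch\^atelet computation from the proof of Theorem~\ref{thm:not-defn-acf} applies. Let $x$ be a generic element of the field sort over $\kk$ and put $k = \kk(x)$. The fiber $L_x$ is an $\mathcal{A}'$-definable principal homogeneous space over the finite discrete group $\mu_N$, defined over $k$. Because $\mu_N$ is $0$-dimensional and $L_x$ consists of $N$ points algebraic over $k$, Proposition~\ref{prop:wc} combined with Theorem~\ref{thm:kummer} assigns to $L_x$ a class in $H^1(\Gal(k^{\sep}/k), \mu_N) \cong k^\times / (k^\times)^N$ represented by a polynomial $f(x) \in \kk[x]$, and analogously assigns the class of $L_{x+1}$ the representative $f(x+1)$. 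Clause~\ref{item:phs-prod} of Definition~\ref{defn:qho-struct} forces the PHS product $L_x \cdot L_{x+1}^{\op}$ to coincide with $\sqrt{x}\mu_N$, yielding
$$
\frac{f(x)}{f(x+1)} \equiv x^{N/2} \pmod{(k^\times)^N}.
$$
Applying the $\mathrm{deg_N}$ homomorphism produces $0 \equiv N/2 \pmod N$, a contradiction.

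The main obstacle is the second step: justifying that an $\mathcal{A}'$-definable $\mu_N$-torsor over $k$ corresponds to a class in $H^1(\Gal(k^{\sep}/k), \mu_N)$ exactly as in the algebraic setting. This is where the richer definable structure of $\mathcal{A}'$ (complex analytic subvarieties rather than only constructible sets) could in principle introduce new phenomena. The point, however, is that a free transitive action of the finite discrete group $\mu_N$ on an $N$-element $k$-definable set is an inherently $0$-dimensional algebraic datum: the $N$ points of $L_x$ lie in the model-theoretic algebraic closure of $k$ inside $\mathcal{A}'$, and the restriction to this finite orbit of the automorphisms of $\mathcal{A}'$ fixing $k$ factors through $\Gal(k^{\sep}/k)$ acting on the $k^{\sep}$-points of the corresponding $0$-dimensional $k$-variety with $\mu_N$-action. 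Once this compatibility is in place, the cohomological bookkeeping of Theorem~\ref{thm:not-defn-acf} transfers with no further changes.
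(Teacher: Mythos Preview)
Your overall strategy matches the paper's: reduce to the $\ACF$ argument via Theorem~\ref{thm:ccm-field}. However, the crucial reduction step---your last paragraph---has a real gap, and the paper fills it by a different (and necessary) argument.

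You assert that the action of the automorphisms of $\mathcal{A}'$ fixing $k$ on the finite set $L_x$ ``factors through $\Gal(k^{\sep}/k)$ acting on the $k^{\sep}$-points of the corresponding $0$-dimensional $k$-variety.'' But there is no such variety a priori: $L_x$ lives in the $L$-sort of $\CA_N$, interpreted somewhere in $\mathcal{A}'$, not in the field sort. Its points lie in $\acl_{\mathcal{A}'}(k)$, which is model-theoretic algebraic closure in $\mathcal{A}'$, not field-theoretic algebraic closure; in a structure richer than a pure field these need not coincide, and there is no reason the permutation action of $\Aut(\mathcal{A}'/k)$ on $L_x$ should be governed by $\Gal(k^{\sep}/k)$. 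The phrase ``inherently $0$-dimensional algebraic datum'' does not establish this. What is actually needed is that the structure induced by $\mathcal{A}'$ on $\mathbb{C}(\mathcal{A}')$ is \emph{pure}: every $\mathcal{A}'$-definable subset of a Cartesian power of the field sort is already field-definable. The paper invokes exactly this, via Chow's theorem (every closed analytic subset of a power of $\mathbb{P}^1$ is algebraic) together with quantifier elimination for $\mathcal{A}$. Combined with the observation that $L_x$ is internal to $\mathbb{C}(\mathcal{A}')$ (naming one point of $L_x$ yields a definable bijection with $\mu_N$), purity lets one replace $L_x$ by a genuinely field-definable $\mu_N$-torsor, after which the proof of Theorem~\ref{thm:not-defn-acf} applies verbatim.

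A secondary issue: you take the parameter set to be, without loss of generality, a subfield $\kk$ of the field sort. In $\mathcal{A}'$ the parameters may live in arbitrary sorts (points of compact complex spaces that are not algebraic), so this reduction is not automatic. The paper simply works over an arbitrary parameter set $B$ and picks $x \in \mathbb{C}(\mathcal{A}')$ generic over $B$; purity then handles the rest.
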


\begin{proof}
  Suppose the contrary, so $\CA_N$ is defined in $\mathcal A'$ over
  some set of parameters $B$. Then by Theorem~\ref{thm:ccm-field}
  $\mathbb{F}(\mathcal A')$ is definably isomorphic to
  $\mathbb{C}(\mathcal A')$, and we can without loss of generality
  suppose that $\mathbb{F}(\mathcal A')$ is interpreted by the field
  $\mathbb{C}(\mathcal A')$.

  Pick a point $x$ in $\mathbb{C}(\mathcal A')$ generic over $B$. As
  in the proof of Theorem~\ref{thm:not-defn-acf} denote $L_x$ and
  $L_{x+1}$ the definable sets $p^{-1}(x)$ and $p^{-1}(x+1)$
  respectively. Notice that $L_x(\mathcal A')$ and $L_{x+1}(\mathcal
  A')$ are internal to $\mathbb{C}(\mathcal A')$, i.e. there exist
  bijections between them and some principal homogeneous sets
  definable in the field $\mathbb{C}(\mathcal A')$ definable after
  adding some parameters (one only has to name a single point of $L_x$
  or $L_{x+1}$). It follows from the fact that every closed set of any
  Cartesian power of $\mathbb{P}^1(\Aa)$ is algebraic (by Chow's
  theorem; or one can derived this from the purity theorem for Zariski
  geometries), and from quantifier elimination that the field
  $\C(\Aa')$ is pure, i.e. has no additional definable sets but those
  that are definable in it as a field. Therefore, the proof of
  Theorem~\ref{thm:not-defn-acf} applies without modifications.
\end{proof}

\section{Definable sets in $Q_N$}

We can view $Q_{N}$ as a two-sorted structure $(\mathcal{H}, \mathbb{F})$, 
where $\mathcal{H}$ is defined as before. Introduce the projection map 
$\mathbf{p}: \mathcal{H} \rightarrow \mathbb{P}$ where 
$\mathbf{p}: (e,x) \mapsto \pi(e)$. Note that $\mathbf{p}$ is definable. We wish to pick `canonical basis' elements in each fiber $\mathcal{H}_{x}$ which we regard as having modulus one. In our terminology, these canonical basis elements are exactly the elements $(e,1)$ in each fiber. We introduce a (definable) predicate $\mathbf{E}(e,y)$ which says '$e$ is a canonical basis element of the fiber $\mathbf{p}^{-1}(y)$. 
\\
\\
We provide some motivation for the definable sets we wish to consider. Suppose that $v = (v_{1}, \dots, v_{s})$ is a tuple from the sort $\mathcal{H}$. We can re-index the $v_{i}$ according to the fibers of $\mathbf{p}$ in which they appear. Namely, we fix an enumeration $\{v_{ij}: 1 \leq i \leq t; 1 \leq j \leq s_{i}, \sum_{i} s_{i} = s \}$ so that given $v_{ij}, v_{kl}$, we have $i = k$ if and only if $\mathbf{p}(v_{ij}) = \mathbf{p}(v_{kl})$. By the axioms, we can find $a_{i} \in \mathbb{P}$ such that $\mathbf{p}(v_{ij}) = a_{i}$ for every $1 \leq i \leq t$. Moreover, because each $\mathbf{p}^{-1}(a_{i})$ is one-dimensional and we have basis elements $e_{i} \in \mathbf{E}(\mathcal{H}, a_{i})$, we can find scalars $\lambda_{ij} \in \mathbb{F}$ such that 
\[ \bigwedge_{i=1}^{t} \bigwedge_{j=1}^{s_{i}} \lambda_{ij} e_{i} = v_{ij} \] holds. Thus one expects all the sentences satisfied by $v$ to be determined by all the inter-relationships between the $e_{i}$. But the relationships between the $e_{i}$ depend on the orbits of the action of the additive subgroup $\mathbb{Z} \subseteq \mathbb{F}$ on $\mathbb{P}$. We set up some notation to describe these relationships. Suppose that $e_{i}$ and $e_{j}$ lie in the same orbit. Then $\pp(e_{j}) - \pp(e_{i}) = n$ for some $n \in \mathbb{Z}$; without loss $n$ is non-negative. Then there is a `path' in the structure connecting the fiber containing $e_{i}$ to the fiber containing $e_{j}$ via the operator $\BB$. We wish to construct an existential sentence $\theta_{ij}$ that codes this path. Writing $e_{i}^{0}$ for $e_{i}$, our candidate for $\theta_{ij}$ is the following:
\[\exists \gamma_{ij} \exists_{k=1}^{p} b_{ijk} \exists_{k=1}^{p} e_{i}^{k} \left( \begin{array}{l}\bigwedge_{k = 1}^{p} \mathbf{E}(e_{i}^{k}, \mathbf{p}(e^{k-1}_{i}) + 1) \wedge \bigwedge_{k=1}^{p} \BB e_{i}^{k-1} = b_{ijk} e_{i}^{k} \\  \wedge b^{2}_{ijk} = \mathbf{p}(e^{k}_{i}) \wedge e_{j} = \gamma_{ij} e^{p}_{i} \end{array} \right) \] This sentence is, of course, satisfied in $Q_N$. We now have enough information to construct a class of formulas with which to prove quantifier elimination.

\begin{definition} 
\label{definition - core formulas}
Let $\{v_{ij}: 1 \leq i \leq t; 1 \leq j \leq s_{i}, \sum_{i} s_{i} = s \}$ and $x = (x_{1}, \dots x_{r})$ be tuples of variables from the sorts $\mathcal{H}$ and $\mathbb{F}$ respectively. A \textbf{core formula} with variables $(v,x)$ is defined to be a formula of the following shape:

\[ \exists \lambda \exists_{i=1}^{t} e_{i} \exists_{i=1}^{t}y_{i} \exists \gamma \exists b\left( \begin{array}{ll} \bigwedge_{i=1}^{t} \bigwedge_{j =1}^{s_{i}} \pp(v_{ij}) = y_{i} \wedge \lambda_{ij} e_{i} = v_{ij} \wedge \mathbf{E}(e_{i}, y_{i}) \wedge \bigwedge_{(i,j) \in \Xi} \phi_{ij}(e_{i}, e_{j}, b_{ij}, \gamma_{ij}) \\ \wedge S(\lambda, y, \gamma, b, x) \end{array} \right) \] where

\begin{enumerate}
\item $\Xi$ is a subset of $\{(i,j): 1 \leq i,j \leq t\}$.
\item $y_{i} = y_{k}$ if and only if $i = k$. 
\item $S$ defines a Zariski constructible subset of $\mathbb{F}^{r_{1}} \times \mathbb{P}^{t} \times \mu_{N}^{r_{2}}$ where
\begin{enumerate}
\item $r_{1} = l(x) + l(b) + s$ (l denotes length)
\item $r_{2} = l(\gamma)$
\end{enumerate}
\item $\phi_{ij}$ is $\theta_{ij}$ with the existential quantification over $\gamma_{ij}, b_{ik}$ removed. 
\end{enumerate} A \textbf{core type} is defined to be a consistent collection of core formulas. If $(v,a)$ is a tuple of elements from $\mathcal{H}^{s} \times \mathbb{F}^{r}$ and $C \subseteq \mathbb{F}$ is a set of parameters, the \textbf{core type of $(v,a)$ over $C$}, denoted $\ctp (v,a/C)$, is defined to be the set of all core formulas with parameters from $C$ satisfied by $(v,a)$. 
\end{definition} 

\begin{proposition} 
\label{prop: core qe}
Let $Q_N$ be $\aleph_0$-saturated. Suppose that $(v,c), (w,d)$ are both tuples from $\mathcal{H}^{s} \times \mathbb{F}^{r}$ with the property that $\ctp(v,c/C) = \ctp(w,d/C)$. Then $\tp(v,c/C) = \tp(w,d/C)$. 
\end{proposition}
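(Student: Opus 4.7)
The plan is to produce an automorphism of $Q_N$ fixing $C$ pointwise and sending $(v,c)$ to $(w,d)$; since $Q_N$ is $\aleph_0$-saturated this is equivalent to equality of types. The construction will proceed in two stages: first, using the core type, produce a field automorphism $\tau$ of $\mathbb{F}$ over $C$ that matches up all the field-sort data recorded inside the core formulas; second, lift $\tau$ to a structural automorphism $\hat\tau$ of $Q_N$, as in the proof of Proposition~\ref{prop1} (applied to $Q_N$ via the bi-interpretability of Remark~\ref{rem: bi-interp}), chosen so that $\hat\tau(e_i^v) = e_i^w$ for suitable canonical basis witnesses.

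For the first stage I would extract existential witnesses for the core formulas: a canonical basis $e_i^v \in \mathbf{E}(\mathcal{H}, \pp(v_{i1}))$, scalars $\lambda_{ij}^v$ with $v_{ij} = \lambda_{ij}^v e_i^v$, and path data $(\gamma_{ij}^v, b_{ijk}^v, e_i^{v,k})$ realizing every $\phi_{ij}$ occurring in $\ctp(v,c/C)$. By hypothesis each core formula satisfied by $(v,c)$ is also satisfied by $(w,d)$, so $\aleph_0$-saturation allows matching witnesses for $(w,d)$ to be chosen such that the combined field-sort tuples $F^v = (c, \lambda^v, \pp(v), \gamma^v, b^v)$ and $F^w = (d, \lambda^w, \pp(w), \gamma^w, b^w)$ lie in the same Zariski constructible sets $S$ over $C$ appearing in the $S$-clauses of these core formulas. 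Quantifier elimination in $\ACF_0$ then yields equality of the field-sort types of $F^v$ and $F^w$ over $C$, hence a field automorphism $\tau$ of $\mathbb{F}$ fixing $C$ with $\tau(F^v) = F^w$.

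For the second stage I would lift $\tau$ to an automorphism $\hat\tau$ of $Q_N$ that sends $e_i^v$ to $e_i^w$ for every $i$. Following Proposition~\ref{prop1}, fix a system of representatives for the $\mathbb{Z}$-orbits of $\Ll$ including all the points $\pp(e_i^v)$; the lift is then determined by a free choice of one basis element in the fiber above each representative, everything else being forced by the $\A^\dagger$-successor relation. Within a single $\mathbb{Z}$-orbit the values $\hat\tau(e_i^v)$ are propagated via the path formulas $\phi_{ij}$, and the identity $\tau(b^v, \gamma^v) = (b^w, \gamma^w)$ guarantees that setting $\hat\tau(e_i^v) = e_i^w$ is consistent across the orbit, the $\mu_N$-ambiguity in each fiber being absorbed by the corresponding $\gamma_{ij}$. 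One then obtains $\hat\tau(v_{ij}) = \tau(\lambda_{ij}^v) \hat\tau(e_i^v) = \lambda_{ij}^w e_i^w = w_{ij}$ and $\hat\tau(c) = d$, as required. The main obstacle is exactly this consistency check: one must verify that the propagation recipe of Proposition~\ref{prop1} carries $e_i^v$ to $e_i^w$ simultaneously for all $i$, and it is the role of the path witnesses $(\gamma_{ij}, b_{ijk})$ in the definition of a core formula to make this automatic.
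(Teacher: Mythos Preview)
Your proposal is correct and follows essentially the same route as the paper: extract canonical-basis and path witnesses for $(v,c)$, use equality of core types together with $\aleph_0$-saturation (the paper packages this as a type $\Sigma$ and a finite-consistency claim) to obtain matching witnesses for $(w,d)$ with the same field type over $C$, pass to a field automorphism, and then lift it orbit-by-orbit exactly as in Proposition~\ref{prop1}, checking that the path data force $e_i^v\mapsto e_i^w$. The only cosmetic slip is that you cannot literally take \emph{all} the $\pp(e_i^v)$ as orbit representatives when several lie in the same $\mathbb{Z}$-orbit; the paper picks one per orbit and lets the inductive construction propagate to the others, which is precisely the consistency check you describe.
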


\begin{proof} We construct an automorphism $\tilde{\sigma}$ such that $\tilde{\sigma}: (v,c) \mapsto (w,d)$. The tuple $v$ is indexed as $\{v_{ij}: 1 \leq i \leq t; 1 \leq j \leq s_{i}, \sum_{i} s_{i} = s \}$ so that given $v_{ij}, v_{kl}$, we have $i = k$ if and only if $\pp(v_{ij}) = \pp(v_{kl})$. By what has already been discussed, the axioms provide us with:

\begin{enumerate}
\item Tuples $a_{i}^{1}$ such that $\pp(v_{ij}) = a_{i}^{1}$ for every $1 \leq i \leq t$. 
\item Basis elements $e_{i}^{1} \in \mathbf{E}(\mathcal{H}, a_{i})$ and scalars $\lambda_{ij}^{1}$ such that 
\[ \bigwedge_{i = 1}^{t} \bigwedge_{j=1}^{s_{i}} \lambda_{ij}^{1} e_{i}^{1} = v_{ij} \]
\end{enumerate} Now we construct the set $\Xi$ so that $(i,j) \in \Xi$ if and only if there is a path from $\pp^{-1}(a_{i}^{1})$ to $\pp^{-1}(a_{j}^{1})$ (coded by $\theta_{ij}$). Then the following conjunct holds:
\[ \bigwedge_{i=1}^{t} \bigwedge_{j=1}^{s_{i}} \pp(v_{ij}) = a_{i}^{1} \wedge \lambda_{ij}^{1}e_{i}^{1} = v_{ij} \wedge \mathbf{E}(e_{i}^{1}, a_{i}^{1}) \wedge \bigwedge_{(i,j) \in \Xi} \phi_{ij}(e^{1}_{i}, e^{1}_{j}, b^{1}_{ij}, \gamma^{1}_{i}) \] Denote the above formula by $\phi(v, e^{1}, a^{1}, \lambda^{1}, \gamma^{1}, b^{1})$. Consider the following set of formulas:
\[ \Sigma = \begin{array}{ll} \{\phi(w,e',x', \lambda', \gamma',b') \wedge S(x',\lambda',\gamma',b',d) : \\ \qquad Q_N \models \phi(v,e^{1},a^{1},\lambda^{1}, \gamma^{1},b^{1}) \wedge S(a^{1}, \lambda^{1},\gamma^{1},b^{1},c) \} \end{array} \] Here the variables have been primed to distinguish them from actual parameters. The $S$ range over all constructible subsets of an appropriate cartesian power of $\mathbb{F}$ with parameters from $C$. 
\\
\\
\textbf{Claim}: $\Sigma$ is consistent. 

\begin{proof} We show that $\Sigma$ is finitely consistent. By definition $\Sigma$ is closed under finite conjunctions, so let $\phi \wedge S \in \Sigma$. Then
\[ Q_N \models \phi(v,e^{1},a^{1},\lambda^{1},\gamma^{1},b^{1}) \wedge S(a^{1}, \lambda^{1},\gamma^{1},b^{1},c) \] Existentially quantifying out $e^{1},a^{1}, \lambda^{1}, \gamma^{1}$ and $b^{1}$, we obtain a core formula satisfied by $(v,c)$ over $C$. But $\ctp(v,c/C) = \ctp(w,d/C)$, so there are $e^{2}, a^{2}, \lambda^{2}, \gamma^{2}, b^{2}$ such that 
\[ Q_N \models \phi(w,e^{1},a^{1},\lambda^{1},\gamma^{1},b^{1}) \wedge S(a^{1}, \lambda^{1},\gamma^{1},b^{1},d) \] as required.
\end{proof} By saturation, the type $\Sigma$ is satisfied by a tuple $(e^{2}, a^{2}, \lambda^{2}, \gamma^{2}, b^{2})$ say. In particular, we have that
\[ \tp^{\mathbb{F}}(a^{1}, \lambda^{1}, \gamma^{1}, b^{1},c) = \tp^{\mathbb{F}}(a^{2}, \lambda^{2}, \gamma^{2}, b^{2}, d) \] and by saturation of $\mathbb{F}$ we therefore obtain an isomorphism $\sigma$ of $\mathbb{F}$ such that
\[ \sigma: (a^{1}, \lambda^{1}, \gamma^{1}, b^{1},c) \mapsto (a^{2}, \lambda^{2}, \gamma^{2}, b^{2}, d) \] It remains to extend $\sigma$ to the whole of $Q_N$; the proof proceeds as for categoricity. Partition $\mathbb{P}$ into orbits of $\mathbb{Z}$,
\[ \mathbb{P} = \bigcup_{x \in \Lambda} x + \mathbb{Z} \] for some set of representatives $\Lambda$. Given $x \in \Lambda$ suppose that only $a^{1}_{i_{1}}, \dots, a^{1}_{i_{q}}$ occur in the orbit $x + \mathbb{Z}$. Without loss, $x = a^{1}_{i_{1}}$ and the $a^{1}_{i_{j}}$ are listed in order of increasing distance from $x$. Now inductively extend as in the proof of Proposition \ref{prop1} noting that the construction automatically maps $e_{i_{j}}^{1} \mapsto e_{i_{j}}^{2}$ for every $1 \leq j \leq q$. 
\end{proof} It follows by compactness that every formula with parameters from $\mathbb{F}$ is equivalent to a boolean combination of core formulas. Some further analysis reveals the structure of subsets defined using parameters from both $\mathcal{H}$ and $\mathbb{F}$. These are determined by a class of formulas similar to core formulas, the only difference being that these formulas can also express information about how bases from the fibers containing these parameters from $\mathcal{H}$ are connected to other fibers via paths.

\begin{definition} 
\label{defn: gen core}
Let $e'$ be a tuple of elements from $\mathcal{H}$ with length $p$ such that all $e'_{i}$ are basis elements. Let $v = (v_{1}, \dots, v_{m})$, $w = (w_{1}, \dots, w_{n})$ be tuples of variables from $\mathcal{H}$. A \textbf{general core formula} with variables $(v,w,x)$ over $e'$ is defined to be a formula of the following shape:
\[ \exists \lambda \exists \mu \exists_{i=1}^{s} e_{i} \exists_{i=1}^{s} y_{i} \exists \gamma \exists b \left( \begin{array}{l} \bigwedge_{i=1}^{s} \bigwedge_{j=1}^{s_{i}} \pp(v_{ij}) = y_{i} \wedge \lambda_{ij}e_{i} = v_{ij} \wedge \mathbf{E}(e_{i}, y_{i}) \wedge \phi \wedge \bigwedge_{(i,j) \in \Xi} \phi_{ij} \\ \wedge S(\lambda, \mu, y, \gamma, b, x) \end{array} \right) \] where 
\begin{enumerate}
\item $\{v_{ij}: 1 \leq i \leq s, 1 \leq j \leq s_{i}\}$ is an appropriate enumeration of $v$
\item $\Xi \subseteq \{(i,j): 1 \leq i,j \leq s\}$
\item $S$ is a constructible subset of $\mathbb{F}^{r_{1}} \times \mathbb{P}^{s} \times \mu_{N}^{r_{2}}$ where
\begin{enumerate}
\item $r_{1} = l(x) + l(b) + m + n$
\item $r_{2} = l(\gamma)$
\end{enumerate}
\item $\phi$ is defined to be
\[ \bigwedge_{i=1}^{p} \bigwedge_{j=1}^{p_{i}} \mu_{ij} e'_{i} = w_{ij} \wedge \bigwedge_{(i,j) \in \Xi_{1}} \phi_{ij}(e'_{i}, e_{j}, b_{i}, \gamma_{ij}) \wedge \bigwedge_{(i,j) \in \Xi_{2}} \phi_{ij}(e_{i}, e'_{j}, b_{i}, \gamma_{ij}) \] where 
\[ \Xi_{1} \subseteq \{(i,j): 1 \leq i \leq p, 1 \leq j \leq s\} \qquad \Xi_{2} \subseteq \{(i,j): 1 \leq i \leq s, 1 \leq j \leq p \} \] and $\{w_{ij}: 1 \leq i \leq p, 1 \leq j \leq p_{i}\}$ is an appropriate enumeration of $w$. 
\end{enumerate}
We denote such a formula by $\exists e S$ and call $S$ the \textbf{Zariski constructible component} of $\exists e S$. 
\end{definition}

\begin{proposition} 
\label{prop: gen core qe} 
If $\phi$ is a formula with parameters from $\mathcal{H},\mathbb{F}$ then it is equivalent to a Boolean combination of general core formulas. 
\end{proposition}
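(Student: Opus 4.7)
The plan is to adapt the proof of Proposition~\ref{prop: core qe}, treating the parameter tuple $e'$ from $\mathcal{H}$ in essentially the same way that the basis elements of the fibres of $v$ were treated there. One first reduces to the case when every entry of $e'$ is a basis element: any $h\in\mathcal{H}$ can be written $h=\nu e$ with $e\in\mathbf{E}$ and $\nu\in\mathbb{F}$, and both $e$ and $\nu$ are definable from $h$, so they may be moved into $e'$ respectively into the $\mathbb{F}$-parameters. By compactness it then suffices to prove the analogue of Proposition~\ref{prop: core qe}: if $Q_N$ is $\aleph_0$-saturated and $(v,c),(w,d)\in\mathcal{H}^s\times\mathbb{F}^r$ satisfy the same general core formulas over $e'\cup C$, then $\tp(v,c/e'\cup C)=\tp(w,d/e'\cup C)$.

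To prove this, index $v$ by fibres as in Definition~\ref{definition - core formulas}, pick basis elements $e_i^{1}\in\mathbf{E}(\mathcal{H},a_i^{1})$ and scalars $\lambda_{ij}^{1}$ with $v_{ij}=\lambda_{ij}^{1}e_i^{1}$, and pick scalars $\mu_{ij}^{1}$ with $w_{ij}=\mu_{ij}^{1}e'_i$ in the indexing convention of Definition~\ref{defn: gen core}. Let $\Xi$ consist of all pairs $(i,j)$ such that $a_i^{1}$ and $a_j^{1}$ lie in the same $\mathbb{Z}$-orbit of $\Ll$, and let $\Xi_1$ (resp.\ $\Xi_2$) consist of the pairs for which $\pp(e'_i)$ and $a_j^{1}$ (resp.\ $a_i^{1}$ and $\pp(e'_j)$) lie in the same $\mathbb{Z}$-orbit; the corresponding path-witnesses $\gamma^{1},b^{1}$ exist in $Q_N$ by the axioms, and together the data assemble into a formula $\phi(v,e^{1},a^{1},\lambda^{1},\mu^{1},\gamma^{1},b^{1})$ matching the body of Definition~\ref{defn: gen core} without its outer Zariski constructible component $S$. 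Form the partial type
\[
\Sigma = \{\phi(w,e',x',\lambda',\mu',\gamma',b')\wedge S(x',\lambda',\mu',\gamma',b',d)\mid Q_N\models\phi(v,e^{1},a^{1},\lambda^{1},\mu^{1},\gamma^{1},b^{1})\wedge S(a^{1},\lambda^{1},\mu^{1},\gamma^{1},b^{1},c)\}
\]
where $S$ ranges over Zariski constructible subsets with parameters from $C$. Finite consistency is verified exactly as in Proposition~\ref{prop: core qe}: existentially quantifying the $\mathbb{F}$-variables together with $e^{1}$ in any finite conjunction yields a general core formula over $e'\cup C$ satisfied by $(v,c)$, which by hypothesis is also satisfied by $(w,d)$. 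Saturation produces a realising tuple $(e^{2},a^{2},\lambda^{2},\mu^{2},\gamma^{2},b^{2})$, and the tuples $(a^{1},\lambda^{1},\mu^{1},\gamma^{1},b^{1},c)$ and $(a^{2},\lambda^{2},\mu^{2},\gamma^{2},b^{2},d)$ realise the same complete type over $C$ in the algebraically closed field $\mathbb{F}$; so there is a field automorphism $\sigma$ of $\mathbb{F}$ fixing $C$ that carries the first tuple to the second.

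It remains to extend $\sigma$ to an automorphism $\tilde\sigma$ of $Q_N$ which is the identity on $e'$ and sends $(v,c)$ to $(w,d)$. Partition $\Ll$ into $\mathbb{Z}$-orbits and proceed orbit by orbit as in Proposition~\ref{prop1}: on an orbit containing some $\pp(e'_{i_0})$, start by setting $\tilde\sigma(e'_{i_0})=e'_{i_0}$ and propagate via the $\A^{\dagger}$-axioms; on an orbit disjoint from $\pp(e')$ but meeting some $a_j^{1}$, set $\tilde\sigma(e_j^{1})=e_j^{2}$ and propagate; on the remaining orbits extend arbitrarily. The main obstacle, which the path data $\Xi,\Xi_1,\Xi_2$ were designed to overcome, is to show that the propagation is well-defined and automatically satisfies $\tilde\sigma(e_j^{1})=e_j^{2}$ for every $j$. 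Suppose $a_j^{1}$ lies in the orbit of $\pp(e'_{i_0})$; the conjunct $\phi_{i_0 j}$ (appearing in the $\Xi_1$-part of $\phi$) exhibits a specific chain of $\A^{\dagger}$-steps from $e'_{i_0}$ to $e_j^{1}$ using the scalars $b^{1}$ and a root of unity $\gamma^{1}\in\mu_N$. Applying $\tilde\sigma$ turns this into a chain from $e'_{i_0}$ to $\tilde\sigma(e_j^{1})$ with witnesses $\sigma(b^{1})=b^{2}$ and $\sigma(\gamma^{1})=\gamma^{2}$, while the conjunct $\phi_{i_0 j}$ in $\phi(w,e',e^{2},a^{2},\lambda^{2},\mu^{2},\gamma^{2},b^{2})$, which holds in $Q_N$ by the realisation of $\Sigma$, provides exactly such a chain ending at $e_j^{2}$; freeness of the $\mu_N$-action on fibres forces $\tilde\sigma(e_j^{1})=e_j^{2}$. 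The cases handled by $\Xi$ and $\Xi_2$ are analogous. Once this compatibility is established, $\tilde\sigma(v_{ij})=\sigma(\lambda_{ij}^{1})e_i^{2}=\lambda_{ij}^{2}e_i^{2}=w_{ij}$ and $\tilde\sigma(c)=d$ follow immediately, completing the proof.
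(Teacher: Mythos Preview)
Your approach is genuinely different from the paper's. The paper does not redo the back-and-forth argument; instead it observes that a formula $\phi(v,x)$ with $\mathcal{H}$-parameters $w$ is the specialisation at $w$ of a formula $\phi_1(v,w',x)$ having only $\mathbb{F}$-parameters, applies Proposition~\ref{prop: core qe} to $\phi_1$ to write it as a Boolean combination of core formulas, and then shows by a purely syntactic simplification that substituting the actual parameters $w$ for the variables $w'$ in a core formula produces a finite disjunction of general core formulas (once the $e_k$ attached to the fibres of the $w_k$ become concrete basis elements, the corresponding existential quantifiers, $\mathbf{E}$-conjuncts and inter-parameter path conjuncts become redundant and can be dropped). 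This route is short and reuses Proposition~\ref{prop: core qe} as a black box; yours is more self-contained but repeats the automorphism construction.

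Your direct argument is essentially correct but has a gap in the extension step. When you produce the field automorphism $\sigma$ fixing $C$ and then set $\tilde\sigma(e'_{i_0})=e'_{i_0}$, you are tacitly assuming $\sigma(\pp(e'_{i_0}))=\pp(e'_{i_0})$; otherwise $\tilde\sigma(e'_{i_0})$ cannot lie in the correct fibre. Nothing in your $\Sigma$ forces this when no coordinate of the tuple lies in the $\mathbb{Z}$-orbit of $\pp(e'_{i_0})$. Likewise, if several $e'_i$ share a $\mathbb{Z}$-orbit, the propagation from one of them reaches the others only if $\sigma$ fixes the scalars $\beta$ determined by $(\BB)^n e'_i=\beta\, e'_j$, and these need not lie in $C$. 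The remedy is easy and harmless: enlarge $C$ by the finitely many $\mathbb{F}$-elements definable from $e'$ (the projections $\pp(e'_i)$ and the inter-$e'$ transition coefficients), which the paper's substitution procedure in effect also does. With that adjustment your construction goes through. One minor point: you use $w$ both for the second $\mathcal{H}$-tuple being compared and for the $w$-variables of Definition~\ref{defn: gen core}, which makes the argument hard to read.
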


\begin{proof} Suppose that $\phi(v,x)$ is a formula with free variables $(v,x)$ over a finite set of parameters $w = (w_{1}, \dots, w_{p})$ of $\mathcal{H}$ and some unspecified parameters from $\mathbb{F}$. Then $\phi(v,x)$ is equivalent to some $\phi_{1}(v,w,x)$ where $\phi_{1}(v,w',x)$ is a formula with free variables $(v,w',x)$ merely over some set of parameters from $\mathbb{F}$. Hence $\phi_{1}$ is equivalent to a boolean combination of core formulas over $\mathbb{F}$ by Proposition \ref{prop: core qe}. We show that every core formula is equivalent to a finite disjunction of general core formulas after substitution. 
\\
\\
So let $\varphi(v,w',x)$ be a core formula. We can fix an enumeration $\{v_{ij}: 1 \leq i \leq s, 1 \leq j \leq s_{i}, \sum_{i} s_{i} = n\}$ of $(v,w')$ such that 
\begin{enumerate}
\item $n$ is the length of $(v,w')$.
\item $\pp(v_{ij}) = \pp(v_{kl})$ if and only if $i = k$. 
\item Those $v_{ij}$ for which $v_{ij}$ is not in $w'$ for any $j$ are listed first, i.e. there is a maximum $m \leq s$ such that $v_{ij} \not \in w'$ for all $i \leq m$. 
\item For $i > m$, the $w'$ variables are listed last, i.e. there is a minimum $t_{i} \leq s_{i}$ such that $v_{ij} \in w'$ for all $j > t_{i}$. 
\end{enumerate} Now $\varphi(v,w',x)$ looks like
\[ \exists \lambda \exists_{i=1}^{t} e_{i} \exists_{i=1}^{t}y_{i} \exists \gamma \exists b\left( \begin{array}{ll} \bigwedge_{i=1}^{t} \bigwedge_{j =1}^{s_{i}} \pp(v_{ij}) = y_{i} \wedge \lambda_{ij} e_{i} = v_{ij} \wedge \mathbf{E}(e_{i}, y_{i}) \wedge \bigwedge_{(i,j) \in \Xi} \phi_{ij}(e_{i}, e_{j}, b_{ij}, \gamma_{ij}) \\ \wedge S(\lambda, y, \gamma, b, x) \end{array} \right) \] for some $\Xi \subseteq \{(i,j): 1 \leq i,j \leq s\}$ and $S$ over $k$. Substitute $w$ for $w'$. The resulting formula can be simplified by noting that some of the information it expresses is already contained in the theory. If $k > m$, then $y_{k} = \pp(w_{kl})$ is determined, thus $\exists y_{k}$ and such conjuncts can be dropped for $k > m$. Moreover, $\exists e_{k}$ can be dropped by replacing the formula with a finite disjunction, where each disjunct contains $e'_{k}$ for $e_{k}$ and $e'_{k}$ ranges over the finitely many canonical basis elements of $\pp^{-1}(y_{k})$. This allows us to make further deletions from each disjunct, namely $\mathbf{E}(e'_{k}, y_{k})$ (which trivially holds) and $\lambda_{kl}e'_{k} = w_{kl}$ for $l > t_{k}$ (because $\lambda_{kl}$ is determined), and we can therefore drop $\exists \lambda_{kl}$. This leaves us with the formula

\[  \bigvee_{\begin{array}{c} e' = (e_{t+1}, \dots, e_{s}) \\ e'_{k} \in \pp^{-1}(\pp(w_{kl})) \wedge \mathbf{E}(e'_{k}, \pp(w_{kl})) \end{array}} \exists \lambda \exists_{i=1}^{m} e_{i} \exists_{i=1}^{m}y_{i} \exists \gamma \exists b \left(\begin{array}{l} \bigwedge_{i=1}^{m} \bigwedge_{j=1}^{s_{i}} \pp(v_{ij}) = y_{i} \wedge \lambda_{ij}e_{i} = v_{ij} \\ \wedge \mathbf{E}(e_{i}, y_{i}) \wedge \phi'  \end{array} \right) \] for appropriate $\phi'$ which we now determine. Clearly in
\[ \bigwedge_{(i,j) \in \Xi} \phi_{ij}(e_{i}, e_{j}, b_{ij}, \gamma_{ij}) \] if we substitute the parameters $e'_{k}$ for $k > m$ then some conjuncts are eliminable; namely those $\phi_{kl}(e'_{k}, e'_{l}, b_{kl}, \gamma_{kl})$ for $k,l > m$ (the theory itself tells us about paths that connect the fibers containing these $e'_{k}, e'_{l}$). Hence the quantifiers $\exists b_{kl}$ and $\exists \gamma_{kl}$ can also be eliminated from each disjunct. Define the sets
 \[ \Xi_{1} = \{(i,j) \in \Xi: 1 \leq i \leq m, m < j \leq s \} \qquad \Xi_{2} = \{(i,j) \in \Xi: m < i \leq s, 1 \leq j \leq m \}  \] \[ \Phi = \{(i,j) \in \Xi: 1 \leq i,j \leq m \} \] Then we have $\phi'$ as the formula
 \[ \bigwedge_{i = m+1}^{s} \bigwedge_{j=1}^{t_{i}} \lambda_{ij}e'_{i} = v_{ij} \wedge \bigwedge_{(i,j) \in \Phi} \phi_{ij} \wedge \bigwedge_{i=1}^{2} \bigwedge_{(i,j) \in \Xi_{i}} \phi_{ij}  \wedge S'(\lambda, y, \gamma, b, x) \] where $S'$ is $S$ with the determined parameters $\lambda_{kl}, y_{k}, b_{kl}$ and $\gamma_{kl}$ substituted for the appropriate variables. Now re-label, putting $\mu_{ij} = \lambda_{i+m, j}$. We see that each disjunct is a general core formula as required. 
\end{proof}

\subsection{Constructibility} 
Proposition \ref{prop: gen core qe} suggests taking sets of the form $\exists e C$ (where $C$ defines a closed subset of a cartesian power of $\mathbb{F}$) as giving us the closed subsets of a topology on the sorts of $Q_N$ and their cartesian powers. Additional technicalities are required (suitably adapted from \cite{Zil2}) to deal with finite intersections of sets of the form $\exists e C$ (there is no a priori guarantee that they will still be of this form). Specifically, we require an analysis of how $C$ transforms under applications of $\mu_{N}$ to basis elements in the fibers.

\begin{definition}
\label{defn: mu inv}
Let $\exists e C$ be a general core formula with $C$ giving a closed subset of $\mathbb{F}^{r_{1}} \times \mathbb{P}^{s} \times \mu_{N}^{r_{2}}$. We define the \textbf{action} of $\delta \in \mu_{N}^{r_{2}}$ on $C$ to be 
\[ C^{\delta} = \{(\lambda_{ij}, \mu, y, \gamma, b, a): (\delta_{i}^{-1}\lambda_{ij}, \mu, y, \delta \cdot \gamma, b, a) \in C\} \] where 
\[ \delta \cdot \gamma = \left\{ \begin{array}{ll} \delta_{i}^{-1} \gamma_{ij} \delta_{j} & (i,j) \in \Xi \\ \gamma_{ij} \delta_{j} & (i,j) \in \Xi_{1} \\ \delta_{i}^{-1}\gamma_{ij} & (i,j) \in \Xi_{2} \end{array} \right. \] $C$ is defined to be \textbf{$\mu_{N}$-invariant} if $C^{\delta} = C$ for every $\delta \in \mu_{N}^{r_{2}}$. 
\end{definition}

\begin{lemma}
\label{lem: construct}
If $\phi$ is a formula with parameters from $\mathcal{H}, \mathbb{F}$ then it is equivalent to a Boolean combination of general core formulas with Zariski constructible component of the type $\exists e (C_{1} \wedge \neg C_{2})$ where $C_{1}, C_{2}$ are Zariski closed and $C_{2}$ is $\mu_{N}$-invariant. 
\end{lemma}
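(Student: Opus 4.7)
The plan is to start from Proposition~\ref{prop: gen core qe}, which already expresses $\phi$ as a Boolean combination of general core formulas $\exists e\, S$, where $S$ ranges over arbitrary Zariski constructible subsets of the ambient product $\mathbb{F}^{r_1} \times \mathbb{P}^s \times \mu_N^{r_2}$. It is therefore enough to rewrite each individual $\exists e\, S$ as a Boolean combination of general core formulas whose Zariski constructible component has the form $C_1 \wedge \neg C_2$ with $C_1, C_2$ closed and $C_2$ being $\mu_N$-invariant.

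The first step I would record is the key symmetry: for every $\delta \in \mu_N^{r_2}$, $\exists e\, S \Leftrightarrow \exists e\, S^\delta$. This is because canonical basis elements form a $\mu_N$-torsor on each fibre $\mathbf{p}^{-1}(y_i)$; replacing a candidate witness $e_i$ by $\delta_i^{-1} e_i$ is therefore a legitimate alternative choice, and a direct computation shows that the associated scalars $\lambda, \gamma$ transform by exactly the rule encoded in Definition~\ref{defn: mu inv}, so that the new witness lies in $S^\delta$ precisely when the original witness lay in $S$. Consequently
\[
\exists e\, S \;\Longleftrightarrow\; \bigvee_{\delta \in \mu_N^{r_2}} \exists e\, S^\delta \;\Longleftrightarrow\; \exists e\, \tilde S,
\]
where $\tilde S := \bigcup_\delta S^\delta$ is a $\mu_N$-invariant constructible set.

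The main step is then the following geometric claim: every $\mu_N$-invariant constructible set $\tilde S$ can be written as a finite union $\bigcup_i (D_{1i} \setminus D_{2i})$ with all $D_{1i}, D_{2i}$ $\mu_N$-invariant closed. I would prove this by Noetherian induction on the Zariski closure $\overline{\tilde S}$, which is itself $\mu_N$-invariant. Since $\tilde S$ is constructible and dense in $\overline{\tilde S}$, it contains a dense open subset; hence $\overline{\tilde S} \setminus \tilde S$ is contained in a proper closed subset, so $Z := \overline{\overline{\tilde S} \setminus \tilde S}$ is a $\mu_N$-invariant closed proper subset of $\overline{\tilde S}$ (being the closure of an invariant set). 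Then
\[
\tilde S \;=\; \bigl(\overline{\tilde S} \setminus Z\bigr) \;\cup\; \bigl(\tilde S \cap Z\bigr),
\]
where the first piece is of the desired form with $D_1 = \overline{\tilde S}$ and $D_2 = Z$ both $\mu_N$-invariant closed, and the second is a $\mu_N$-invariant constructible subset of $Z$, with $\overline{\tilde S \cap Z} \subseteq Z \subsetneq \overline{\tilde S}$, to which the induction hypothesis applies.

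Combining, $\exists e\, S \Leftrightarrow \bigvee_i \exists e\,(D_{1i} \setminus D_{2i})$, a Boolean combination of general core formulas of the shape demanded (in fact with both $D_{1i}$ and $D_{2i}$ invariant, slightly more than the statement requires). The principal technical point is the Noetherian decomposition: one must argue that the peeling-off process preserves $\mu_N$-invariance at every stage, for which taking invariant closures as above is the crucial idea. The $\mu_N$-symmetry of $\exists e$ itself is essentially a bookkeeping exercise against the transformation rules in Definition~\ref{defn: mu inv}.
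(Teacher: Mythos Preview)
Your argument is correct, and it takes a genuinely different route from the paper's own proof. The paper does not argue via Noetherian induction on the invariant constructible set. Instead, it works at the level of types: fixing a tuple $\alpha$ of field-sort elements, it shows that the collection $\Sigma(\alpha)$ of formulas $C_1 \wedge \neg C_2$ (with $C_1,C_2$ closed $\mu_N$-invariant) satisfied by $\alpha$ already implies $\tp^{\mathbb{F}}(\alpha)$. The technical core is a combinatorial stabiliser argument: given an arbitrary $C_1 \wedge \neg C_2$ in the type, one replaces $C_2$ by $\tilde C_2 = \bigvee_\delta C_2^\delta$; if $\neg\tilde C_2$ is no longer in the type, one considers the maximal $\Delta \subseteq \mu_N^{r_2}$ with $\bigwedge_{\delta\in\Delta}\neg C_2^\delta$ in the type and uses $\mathrm{Stab}(\Delta)$ to cook up the replacement $\tilde C_1$. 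This then feeds back into the proof of Proposition~\ref{prop: core qe} (restricting the admissible $S$'s), and Proposition~\ref{prop: gen core qe} finishes. Your approach is more elementary and more geometric: once you have invoked the $\mu_N$-symmetry of $\exists e$ to replace $S$ by the invariant $\tilde S$, the decomposition of $\tilde S$ into invariant locally closed pieces is a straightforward induction on the closure, with no need to revisit the type-realisation argument or to run the stabiliser machinery. The paper's method is inherited from \cite{Zil2} and fits into that framework; yours is shorter, self-contained, and in fact delivers both $C_1$ and $C_2$ invariant, which the paper obtains only afterwards in Lemma~\ref{lem: mu inv: replace}.
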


\begin{proof} The same as \cite{Zil2} which we restate in our case. Fix a tuple $\alpha \in \mathbb{F}$ and recall that $tp^{\mathbb{F}}(\alpha)$ denotes the type of $a$ in the language of fields. Put
\[ \Sigma(\alpha) = \{C_{1} \wedge \neg C_{2}: Q_N \models (C_{1} \wedge \neg C_{2})(\alpha) \mbox{ and $C_{1}$, $C_{2}$ are $\mu_{N}$-invariant} \} \] It suffices to prove that $\Sigma(\alpha) \models \tp^{\mathbb{F}}(\alpha)$ (we can then stipulate that the Zariski constructible sets $S$ are boolean combinations of elements of $\Sigma(\alpha)$ for $\alpha = (a^{1}, \lambda^{1}, \gamma^{1}, b^{1},c)$ in the proof of Proposition \ref{prop: core qe}. The proof of this proposition goes through, then apply Proposition \ref{prop: gen core qe}). By quantifier-elimination for $\mathbb{F}$ and noting that every constructible subset is a disjunction of conjuncts of the kind $C_{1} \wedge \neg C_{2}$, it remains to prove that $C_{1}, C_{2}$ (where $C_{1} \wedge \neg C_{2} \in tp^{\mathbb{F}}(\alpha)$) can be replaced with $\tilde{C}_{1}, \tilde{C}_{2}$ (respectively) such that $\tilde{C}_{2}$ is $\mu_{N}$-invariant and $(\tilde{C}_{1} \wedge \neg \tilde{C}_{2})\rightarrow (C_{1} \wedge \neg C_{2})$. Put
\[ \tilde{C}_{2} = \bigvee_{\delta \in \mu_{N}^{r_{2}}} C_{2}^{\delta} \] $\tilde{C}_{2}$ is closed, $\mu_{N}$-invariant and $\neg \tilde{C}_{2}$ implies $\neg C_{2}$. If $\neg \tilde{C}_{2} \in p = tp^{k}(\alpha)$ then we are done. Otherwise $\neg C_{2} \wedge \tilde{C}_{2} \in p$. Let $\Delta$ be the maximal (hence proper) subset of $\mu_{N}^{r_{2}}$ consisting of those $\delta$ such that 
\[ \neg D = \bigwedge_{\delta \in \Delta} \neg C_{2}^{\delta} \in p \] $\Delta$ is non-empty because $1 \in \Delta$. Put
\[ \Stab(\Delta) = \{\delta \in \mu_{N}^{r_{2}}: \delta \Delta = \Delta \} \] If $\delta \not \in \Stab(\Delta)$ then by maximality of $\Delta$ we have $\neg D^{\delta} \wedge \neg D \not \in p$, hence $D^{\delta}\in p$. Thus
\[ \bigwedge_{\delta \in \mu_{N}^{r_{2}}\setminus \Stab(\Delta)} D^{\delta} \in p \] \textbf{Claim}: We have 
\[ Q_N \models \bigwedge_{\delta \in \mu_{N}^{r_{2}} \setminus \Stab(\Delta)} D^{\delta} \wedge \bigvee_{\delta \in \mu_{N}^{r_{2}}} \neg D^{\delta} \rightarrow \bigvee_{\delta \in \Stab(\Delta)} \neg D^{\delta} \] 

\begin{proof} Suppose that $b \in \mathbb{F}$ is such that $D^{\delta}(b)$ holds for every $\delta \in \mu_{N}^{r_{2}} \setminus \Stab(\Delta)$ and $\neg D^{\delta_{1}}(b)$ holds for some $\delta_{1} \in \mu_{N}^{r_{2}}$. Then $\delta_{1} \in \Stab(\Delta)$ and the claim follows. 
\end{proof} The latter disjunct is clearly equivalent to $\neg D$ and $\neg D$ implies $\neg C_{2}$. So we take
\[ \tilde{C}_{1} = C_{1} \wedge \bigwedge_{\delta \in \mu_{N}^{r_{2}}\setminus \Stab(\Delta)} D^{\delta} \] and replace $\tilde{C}_{2}$ with $\bigwedge_{\delta \in \mu_{N}^{r_{2}}} D^{\delta}$. The result now follows. 
\end{proof}

\begin{lemma}
\label{lem: mu inv: replace} Let $\exists e (C_{1} 
\wedge \neg C_{2})$ be a general core formula with $C_{2}$ closed and $\mu_N$-invariant. Then there exists $\mu_N$-invariant $\tilde{C}_{1}$ such that
\[ \exists e (C_{1} \wedge \neg C_{2}) \equiv \exists e (\tilde{C}_{1} \wedge \neg C_{2}) \] 
\end{lemma}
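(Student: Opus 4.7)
The plan is to take $\tilde{C}_1 := \bigcup_{\delta \in \mu_N^{r_2}} C_1^\delta$, i.e., the union of all $\mu_N^{r_2}$-translates of $C_1$ in the sense of Definition~\ref{defn: mu inv}. Since $C_1$ is closed and the union is finite, $\tilde{C}_1$ is closed. It is $\mu_N$-invariant because, using that $\mu_N$ is abelian (so $(C^\delta)^{\delta'} = C^{\delta\delta'}$ follows by unfolding Definition~\ref{defn: mu inv}), one has $(\tilde{C}_1)^{\delta'} = \bigcup_\delta (C_1^\delta)^{\delta'} = \bigcup_\delta C_1^{\delta\delta'} = \tilde{C}_1$.

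The core of the argument is the claim that $\exists e\, C \equiv \exists e\, C^\delta$ for every $\delta \in \mu_N^{r_2}$ and every Zariski constructible $C$, as formulas in $(v,w,x)$. Given a witness $(\lambda, \mu, e, y, \gamma, b)$ to $\exists e\, C$, I would produce a witness to $\exists e\, C^\delta$ via the substitution $e'_i := \delta_i^{-1} e_i$, $\lambda'_{ij} := \delta_i \lambda_{ij}$, and $\gamma'$ obtained by the dual twist (for instance, $\gamma'_{ij} = \delta_i \gamma_{ij} \delta_j^{-1}$ for $(i,j) \in \Xi$). Each conjunct in Definition~\ref{defn: gen core} is preserved: $\mathbf{E}(e'_i, y_i)$ holds because $\mu_N$ acts by basis-preserving transformations within each fiber, $\lambda'_{ij} e'_i = v_{ij}$ collapses to $\lambda_{ij} e_i = v_{ij}$, and the path formulas $\phi_{ij}$ transform correctly in each of the cases $(\Xi, \Xi_1, \Xi_2)$ precisely because Definition~\ref{defn: mu inv} defines the $\delta$-action on $\gamma$ to compensate for the rescaling of path endpoints. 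Finally $(\lambda', \mu, y, \gamma', b, x) \in C^\delta$ unfolds, by the definition of $C^\delta$, to $(\lambda, \mu, y, \gamma, b, x) \in C$. The converse direction uses the analogous substitution with $\delta$ replaced by $\delta^{-1}$.

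Granting this claim, the lemma follows by a short Boolean computation. Since $C_2$ is $\mu_N$-invariant, $C_2^\delta = C_2$ for all $\delta$, and hence $(C_1 \wedge \neg C_2)^\delta = C_1^\delta \wedge \neg C_2$. The main claim then yields $\exists e\,(C_1 \wedge \neg C_2) \equiv \exists e\,(C_1^\delta \wedge \neg C_2)$ for every $\delta$. Taking disjunctions and pulling the existential inside gives
$$\exists e\,(C_1 \wedge \neg C_2) \equiv \bigvee_{\delta \in \mu_N^{r_2}} \exists e\,(C_1^\delta \wedge \neg C_2) \equiv \exists e\,\Bigl( \bigvee_\delta C_1^\delta \wedge \neg C_2 \Bigr) \equiv \exists e\,(\tilde{C}_1 \wedge \neg C_2),$$
which is the desired equivalence.

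The principal obstacle is the bookkeeping in the main claim: verifying that each $\phi_{ij}$ is preserved under $e_i \mapsto \delta_i^{-1} e_i$ requires tracing how the path encoded by $\theta_{ij}$ transforms when its endpoints are rescaled, and the three index sets $\Xi, \Xi_1, \Xi_2$ — according to whether the endpoints of a path are existentially quantified basis elements or fixed parameters $e'_i$ — each produce a different twist rule. This is exactly what the asymmetric formula for $\delta \cdot \gamma$ in Definition~\ref{defn: mu inv} was designed to encode, so the verification is mechanical but requires careful case analysis.
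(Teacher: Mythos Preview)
Your proposal is correct and follows essentially the same approach as the paper: both define $\tilde{C}_1 = \bigvee_\delta C_1^\delta$ and rely on the same basis-rescaling substitution $e_i \mapsto \delta_i^{-1} e_i$ (with the compensating twists on $\lambda$ and $\gamma$ encoded in Definition~\ref{defn: mu inv}) to pass between $\exists e\, C$ and $\exists e\, C^\delta$. The only difference is packaging: you isolate the equivalence $\exists e\, C \equiv \exists e\, C^\delta$ as a standalone claim and then do a Boolean computation, whereas the paper treats the forward implication as trivial (since $C_1 \subseteq \tilde{C}_1$) and applies the substitution only for the reverse implication.
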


\begin{proof} Define
\[ \tilde{C}_{1} = \bigvee_{\delta \in \mu_N^{r_2}} C_{1}^{\delta} \] It is immediate that $\exists e (C_{1} \wedge \neg C_{2})$ implies $\exists e (\tilde{C}_{1} \wedge \neg C_{2})$. Conversely, suppose that there is a tuple $(v,w,a)$ and $\lambda, \mu, e,y,\gamma,b$ such that 
\[ \bigwedge_{i=1}^{s} \bigwedge_{j=1}^{s_{i}} \pp(v_{ij}) = y_{i} \wedge \lambda_{ij}e_{i} = v_{ij} \wedge \mathbf{E}(e_{i}, y_{i}) \wedge \phi \wedge \bigwedge_{(i,j) \in \Xi} \phi_{ij} \wedge (C_{1}^{\delta} \wedge \neg C_{2})(\lambda, \mu, y, \gamma, b, a)\] holds for some $\delta$. Then 
\[ \bigwedge_{i=1}^{s} \bigwedge_{j=1}^{s_{i}} \pp(v_{ij}) = y_{i} \wedge \lambda'_{ij}e'_{i} = v_{ij} \wedge \mathbf{E}(e'_{i}, y_{i}) \wedge \phi \wedge \bigwedge_{(i,j) \in \Xi} \phi_{ij} \wedge (C_{1} \wedge \neg C_{2})(\lambda', \mu, y, \delta \cdot \gamma, b, a)\] holds by $\mu_{N}$-invariance of $C_{2}$, where $\lambda'_{ij} = \delta_{i}^{-1}\lambda_{ij}$. Now existentially quantify out $\lambda',\mu,e',y,\delta \cdot \gamma,b$. 
\end{proof}

\begin{remark} \label{rem: H-param} Suppose that $\exists e S$ is a general core formula with basis parameters $e' = (e'_{1}, \dots, e'_{p})$. Let $e'' = (e''_{1}, \dots, e''_{p})$ be another tuple of basis elements with $\mathbf{p}(e'_{i}) = \mathbf{p}(e''_{i})$ for every $i$. There is $\delta = (\delta_{1}, \dots, \delta_{p}) \in \mu_N^{p}$ such that $\delta_{i}e'_{i} = e''_{i}$ for every $i$. Then $\exists e S \equiv \exists e S'$ where $S'$ is obtained from $S$ by replacing the variables $\mu_{ij}$ by $\mu_{ij} \delta_{i}$, $\gamma_{ij}$ by $\gamma_{ij}\delta_{i}$ for $(i,j) \in \Xi_{1}$ and $\gamma_{ij}$ by $\gamma_{ij}\delta_{i}^{-1}$ for $(i,j) \in \Xi_{2}$. 
\end{remark}

\begin{lemma} 
\label{lem: gen core: boolean}
Let $\exists e C_{1}, \exists e C_{2}$ be general core formulas with the same enumeration of variables. Let $C_{1}, C_{2}$ be Zariski closed and suppose that $C_{2}$ $\mu_{N}$-invariant. Then 
\begin{enumerate}
\item $Q_N \models \exists e(C_{1} \wedge C_{2}) \leftrightarrow \exists e C_{1} \wedge \exists e C_{2}$
\item $Q_N \models \exists e (\neg C_{2}) \leftrightarrow \neg \exists e C_{2}$
\end{enumerate} 
\end{lemma}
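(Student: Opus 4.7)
The plan is to use the $\mu_N$-invariance of $C_2$ to show that satisfaction of $C_2$ is a property of the tuple $(v,w,a)$ alone and not of the particular choice of existential witnesses hidden inside $\exists e$; this collapses both the conjunction in (1) and the negation in (2). Direction $\Rightarrow$ of (1) is immediate. For $\Leftarrow$, pick witnesses $W_k = (\lambda^k, \mu^k, e^k, y^k, \gamma^k, b^k)$ for $\exists e C_1$ and $\exists e C_2$ respectively. The data $y^k_i = \pp(v_{i1})$ and $\mu^k_{ij}$ (uniquely determined by $\mu_{ij} e'_i = w_{ij}$, the parameters $e'$ being fixed) agree across the two witnesses, so $y^1 = y^2$ and $\mu^1 = \mu^2$. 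The canonical basis elements $e^1_i$ and $e^2_i$ lie in the common fiber $\pp^{-1}(y_i)$, so there are unique $\delta_i \in \mu_N$ with $e^2_i = \delta_i e^1_i$. A direct bookkeeping computation (described below) shows that $W_2$ can be taken so that $\lambda^2_{ij} = \delta_i^{-1} \lambda^1_{ij}$, $\gamma^2 = \delta \cdot \gamma^1$ (exactly the action from Definition~\ref{defn: mu inv}), and $b^2 = b^1$. The truth of $C_2(\lambda^2, \mu, y, \gamma^2, b^2, a)$ is then equivalent to that of $C_2^\delta(\lambda^1, \mu, y, \gamma^1, b^1, a)$ by the very definition of $C_2^\delta$, and to $C_2(\lambda^1, \mu, y, \gamma^1, b^1, a)$ by $\mu_N$-invariance. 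Hence $W_1$ serves as a common witness for $\exists e (C_1 \wedge C_2)$.

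For (2) $\Rightarrow$, suppose for contradiction that both $\exists e \neg C_2$ and $\exists e C_2$ hold at $(v,w,a)$, with witnesses $W^*$ and $W'$ respectively. The identical rescaling argument shows $C_2(W^*)$ to be equivalent to $C_2(W')$, which is true, contradicting $\neg C_2(W^*)$. For (2) $\Leftarrow$, assume $\neg \exists e C_2(v,w,a)$; for any $(v,w,a)$ in the structural domain of the formula (the tacit context of application of the lemma), pick any witness $W$ of the structural conjuncts. By hypothesis $C_2(W)$ fails, so $W$ realises $\neg C_2$ and $\exists e \neg C_2$ holds.

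The main technical step is the transformation rule. Rescaling the endpoints $e_i \mapsto \delta_i e_i$, $e_j \mapsto \delta_j e_j$ in $\phi_{ij}$ can be extended through the whole path by $e_i^k \mapsto \delta_i e_i^k$ for each intermediate $k$; this preserves every equation $\BB e_i^{k-1} = b_{ijk} e_i^k$, so the $b_{ijk}$ are unchanged, while the terminal equation $e_j = \gamma_{ij} e_i^p$ becomes $\delta_j e_j = \gamma_{ij}' \delta_i e_i^p$, forcing $\gamma_{ij}' = \delta_i^{-1} \gamma_{ij} \delta_j$. This matches the prescribed $\mu_N$-action on $\Xi$-entries, and the $\Xi_1, \Xi_2$ cases follow by specialising (only one endpoint is rescaled, since the other is a fixed parameter $e'$). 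No serious obstacle arises beyond this bookkeeping; once the transformation rule is in place, both parts of the lemma are immediate consequences of $\mu_N$-invariance.
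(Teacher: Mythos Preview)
Your proof is correct and follows essentially the same approach as the paper: both argue that two witnesses for the same tuple $(v,w,a)$ differ only by a rescaling of the basis elements $e_i$ by some $\delta \in \mu_N$, and that $\mu_N$-invariance of $C_2$ makes its truth independent of this choice. The paper's proof is terser---it simply says ``we can transform $e'$ to $e$ without affecting validity'' and invokes Remark~\ref{rem: H-param} to align the basis parameters---whereas you spell out explicitly how $\lambda$, $\gamma$, and $b$ transform along the path and verify that this is exactly the action of Definition~\ref{defn: mu inv}, which is a useful elaboration of what the paper leaves implicit.
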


\begin{proof} By Remark \ref{rem: H-param} we may assume that $\exists e C_{1}, \exists e C_{2}$ have the same basis parameters. 
\begin{enumerate}
\item Left-to-right is trivial. Conversely, if the right-hand side holds for a tuple $(v,w,a)$, then we may obtain different basis elements $e$ and $e'$ as witnesses to $\exists e C_{1}$ and $\exists e C_{2}$ respectively. But the $\mu_{N}$-invariance of $C_{2}$ means that we can transform $e'$ to $e$ without affecting validity. So the left-hand side holds. 
\item Right-to-left is easy. Conversely, suppose that $(v,w,a)$ satisfies $\exists e (\neg C_{2})$ and that $e$ is a tuple of basis elements witnessing this. If some basis elements $e'$ witness $\exists e C_{2}$ then we can transform $e'$ to $e$, and using the $\mu_{N}$-invariance of $C_{2}$ we get a contradiction. 
\end{enumerate}
\end{proof}

\begin{proposition} 
\label{prop: construct}
All definable subsets are constructible, namely every definable subset is a boolean combination of those defined by general core formulas $\exists e C$ where $C$ is Zariski closed and $\mu_{N}$-invariant. 
\end{proposition}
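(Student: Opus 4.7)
The plan is to chain Lemmas \ref{lem: construct}, \ref{lem: mu inv: replace} and \ref{lem: gen core: boolean} together. By Lemma \ref{lem: construct}, every definable set is already a Boolean combination of general core formulas $\exists e(C_1 \wedge \neg C_2)$ in which $C_1, C_2$ are Zariski closed and $C_2$ is $\mu_N$-invariant. So it suffices to rewrite each such $\exists e(C_1 \wedge \neg C_2)$ as a Boolean combination of formulas $\exists e\, C$ with $C$ Zariski closed and $\mu_N$-invariant.

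First, by Lemma \ref{lem: mu inv: replace} there is a $\mu_N$-invariant Zariski closed $\tilde C_1$ with
$$\exists e(C_1 \wedge \neg C_2) \equiv \exists e(\tilde C_1 \wedge \neg C_2),$$
so that both components of the inner conjunction are $\mu_N$-invariant (for $\neg C_2$, invariance is immediate from that of $C_2$). The key step is now to separate the two components:
$$\exists e(\tilde C_1 \wedge \neg C_2) \ \leftrightarrow \ \exists e\, \tilde C_1 \ \wedge \ \exists e\, \neg C_2.$$
Left-to-right is trivial. For the converse, witnesses $e^{(1)}, e^{(2)}$ to the two conjuncts are tuples of basis elements lying in the \emph{same} fibers (determined by $v$ via $\pp$), so they differ by an element $\delta \in \mu_N^s$. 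The argument from the proof of Lemma \ref{lem: gen core: boolean}(1) then transports the witness of $\tilde C_1$ from basis $e^{(1)}$ to basis $e^{(2)}$ by adjusting the $\lambda$ and $\gamma$ variables according to Definition \ref{defn: mu inv}; the $\mu_N$-invariance of $\tilde C_1$ is exactly the statement that this adjusted witness still satisfies $\tilde C_1$. Thus $e^{(2)}$ serves as a common witness to both, giving the left-hand side.

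Finally, Lemma \ref{lem: gen core: boolean}(2) rewrites $\exists e\, \neg C_2$ as $\neg \exists e\, C_2$, so
$$\exists e(C_1 \wedge \neg C_2) \ \equiv \ \exists e\, \tilde C_1 \ \wedge \ \neg \exists e\, C_2,$$
a Boolean combination of the required form. Substituting back into the original Boolean combination from Lemma \ref{lem: construct} proves the proposition. The only genuinely new ingredient is the symmetric use of invariance in the middle step: the proof of Lemma \ref{lem: gen core: boolean}(1) only needs invariance of the second conjunct, not closedness of the first, so it applies equally well with $\neg C_2$ in place of a closed set, provided $\tilde C_1$ itself has been made invariant.
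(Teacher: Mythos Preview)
Your proof is correct and follows exactly the approach the paper has in mind: the paper's proof is simply ``Immediate by Lemmas \ref{lem: construct}, \ref{lem: mu inv: replace} and \ref{lem: gen core: boolean}'', and you have spelled out precisely how those three lemmas chain together. One small remark: in the splitting step you could equally well transport the witness for $\neg C_2$ (using invariance of $C_2$) rather than for $\tilde C_1$, which is closer to the literal proof of Lemma~\ref{lem: gen core: boolean}(1) and in fact shows that Lemma~\ref{lem: mu inv: replace} is not strictly needed for this implication---though it is still required so that $\exists e\,\tilde C_1$ lands in the desired class of formulas.
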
 

\begin{proof} Immediate by Lemmas \ref{lem: construct}, \ref{lem: mu inv: replace} and \ref{lem: gen core: boolean}. \end{proof}

\section{Zariski Structure} 
We use the following definition of a Zariski structure.

\begin{definition} \label{defn: Zar} A {\bf (Noetherian) Zariski structure} is a tuple $(M, \tau_n, \dim: n \in \mathbb{Z}_{> 0})$ where 
\begin{itemize}
\item $\tau_n$ is a Noetherian topology on $M^{n}$ (satisfies the descending chain condition on closed sets) 
\item $\dim$ is a function which associates to each definable set a non-negative integer (we call this the {\em dimension} of the set)
\end{itemize} such that the following two sets of axioms are satisfied:
\\
\\
{\bf Topological} 
\begin{enumerate}
\item singletons of $M^{n}$ are closed, cartesian products of closed sets are closed, diagonals are closed, the image of a closed set under a permutation of coordinates is closed;
\item for $a \in M^{m}$ and $C$ a closed subset of $M^{m+n}$ the set
\[ C(a,M^{n}) = \{b \in M^{n}: (a,b) \in C \} \] is closed. 
\end{enumerate} \textbf{Dimension} 
\begin{enumerate}
\item singletons are $0$-dimensional
\item $\dim (X \cup Y) = \max\{\dim X, \dim Y\}$
\item closed irreducible subsets of locally closed irreducible subsets have strictly smaller dimension
\item let $X$ be an irreducible locally closed subset of $M^{m+n}$, $\pi: M^{m+n} \rightarrow M^{n}$ a projection onto the first $m$ coordinates. Then 
\[ \dim X = \dim \pi(X) + \min_{a \in \pi(X)} \dim (\pi^{-1}(a) \cap X) \] and there is an open subset $U$ of $\pi(X)$ such that
\[ \dim(\pi^{-1}(a) \cap X) = \min_{a \in \pi(X)}(\pi^{-1}(a) \cap X) \] 
\end{enumerate}
\end{definition} Syntactically, the topologies $\tau_n$ can be specified by introducing a predicate for each closed subset into the language. The reader is referred to \cite{Zil1} for more details. A closed subset is \textbf{irreducible} if it cannot be written as the union of two proper closed subsets. By Noetherianity, every closed set is a finite union of irreducible components. We also require the following additional properties of a Zariski structure $M$ may posess:
\begin{enumerate}
\item \textbf{Pre-smoothness}: for any constructible $X,Y \subseteq M^{n}$ and any irreducible component $Z$ of $X \cap Y$, $\dim Z \geq \dim X + \dim Y - n$
\item \textbf{Completeness}: projections of closed sets are closed. 
\end{enumerate}

\begin{remark} If $\dim$ is defined to be Krull dimension ($\dim C$ for $C$ closed irreducible is the maximal length of a chain of irreducible sets $C_0 \subset C_1 \subset \dots C_n = C$), $\dim M = 1$ and $M$ is presmooth, then Definition \ref{defn: Zar} is equivalent to the definition of a Zariski geometry in \cite{HZ} (see \cite{Zil1}, Section 3.3). 
\end{remark}

\subsection{Topology} 
We introduce a topology on $\mathcal{H}^{n} \times \mathbb{F}^{m}$ by taking as a basis of closed subsets those subsets that are defined by general core formulas $\exists e C(v,w,x)$ ($(v,w,x)$ a tuple of variables from $\mathcal{H}^{n} \times \mathbb{F}^{m}$) where $C$ is Zariski closed and $\mu_{N}$-invariant. Closed subsets are given by finite unions and arbitrary intersections of basic closed subsets. Note that if $n = 0$, then these formulas reduce to those of the form $C(x)$ where $C$ defines a Zariski closed subset of $\mathbb{F}^{m}$. Thus the topology  gives us the classical Zariski topology on the sort $\mathbb{F}$ and its cartesian powers. 

\begin{lemma}
\label{lem: gen core: equiv}
Let $\exists e C_{1}$, $\exists e C_{2}$ be general core formulas with $C_{1}, C_{2}$ Zariski closed and $\mu_N$-invariant. Suppose that both formulas have the same enumeration of variables. Then
\[ Q_N \models \exists e C_{1} \leftrightarrow \exists e C_{2} \Rightarrow Q_N \models C_{1} \leftrightarrow C_{2} \] 
\end{lemma}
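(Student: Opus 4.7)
The plan is to prove both inclusions $C_1 \subseteq C_2$ and $C_2 \subseteq C_1$ symmetrically; I sketch only the first. The strategy is: given a scalar tuple $\alpha \in C_1$, realize $\alpha$ concretely as the witness data of some tuple $(v_0, w_0, a_0) \in \Hh^m \times \Hh^n \times \mathbb{F}^l$ in $Q_N$; transfer the truth of $\exists e\, C_1$ at $(v_0, w_0, a_0)$ to that of $\exists e\, C_2$ via the hypothesized equivalence; and then invoke $\mu_N$-invariance of $C_2$ to pull the resulting witness back to $\alpha$ itself.

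Write $\alpha = (\lambda_0, \mu_0, y_0, \gamma_0, b_0, x_0)$. To build $(v_0, w_0, a_0)$, I would first pick basis elements $e_i \in \pp^{-1}(y_{0,i})$ for each $i = 1, \dots, s$, and then set $v_{0,ij} := \lambda_{0,ij} e_i$, $w_{0,ij} := \mu_{0,ij} e'_i$, and $a_0 := x_0$. The basis elements $e_i$ must be selected so that the path-relations $\phi_{ij}$ in the body of the general core formula hold with the prescribed $\gamma_{0,ij}$ and $b_{0,ij,k}$. Replacing $e_i$ by $\delta_i e_i$ for $\delta \in \mu_N^s$ is exactly the $\mu_N$-action $\rho_\delta$ on scalar data from Definition~\ref{defn: mu inv}; the valid scalar witnesses for a fixed $(v_0, w_0, a_0)$ thus form a single $\mu_N^s$-orbit. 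The $\mu_N$-invariance of $C_1$ ensures that the entire orbit of $\alpha$ lies in $C_1$, so a basis choice matching $\alpha$ does exist.

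Once the realization is in hand, $(v_0, w_0, a_0)$ satisfies $\exists e\, C_1$ and hence by hypothesis also $\exists e\, C_2$; let $\alpha'$ be the resulting witness, so $\alpha' \in C_2$. Since both $\alpha$ and $\alpha'$ arise as scalar witnesses for the same $(v_0, w_0, a_0)$, they differ only by a change of basis in the fibers, i.e.\ $\alpha' = \rho_{\delta'}(\alpha)$ for some $\delta' \in \mu_N^s$. The $\mu_N$-invariance of $C_2$ then yields $\alpha = \rho_{\delta'}^{-1}(\alpha') \in C_2$, as wanted.

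The main subtlety lies in guaranteeing the first step for every $\alpha \in C_1$: for a generic $\alpha$ the path data $(\gamma_0, b_0)$ is constrained by the axioms (e.g.\ $b_{0,ij,k}^2$ equals the relevant $y$-value and the relevant $y_j - y_i$ is a non-negative integer), and if $\alpha$ fails these constraints no realization exists in $Q_N$. The degenerate tuples form a proper Zariski closed subset, on which the above argument does not directly apply; however, since both $C_1$ and $C_2$ are Zariski closed and $\mu_N$-invariant, coincidence on the realizable locus (established above) combined with the fact that $C_1, C_2$ as they arise in Lemma~\ref{lem: construct} are supported on the realizable part extends the equality to the full ambient space.
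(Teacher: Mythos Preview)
Your core argument is the paper's, unpacked. The paper simply invokes Lemma~\ref{lem: gen core: boolean} to get $\exists e\,C_1 \wedge \neg\exists e\,C_2 \equiv \exists e\,(C_1 \wedge \neg C_2)$; since the left side is empty by hypothesis, $C_1 \wedge \neg C_2$ must be inconsistent, and symmetry finishes. Your realize--transfer--invariance routine is exactly what lies behind that Boolean identity (and its proof in Lemma~\ref{lem: gen core: boolean}), so you have rediscovered the mechanism but missed the two-line shortcut via the already-established lemma.

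The subtlety you flag in the last paragraph is real---the implication ``$\exists e\,S$ unsatisfiable $\Rightarrow$ $S$ empty'' tacitly requires every tuple in $S$ to be realizable as witness data---and the paper glosses over it too. Your proposed resolution, however, does not go through: the lemma is stated for \emph{arbitrary} closed $\mu_N$-invariant $C_1, C_2$, not only those produced by Lemma~\ref{lem: construct}, and equality on a \emph{closed} (not dense) locus cannot be extended by Zariski-closedness alone. The clean repair is to observe that the constraints hard-wired into the $\phi_{ij}$ (the conditions $b_{ijk}^2 = y_i + k$, the integer offsets on the $y$'s, and $\gamma \in \mu_N^{r_2}$) cut out a closed $\mu_N$-invariant locus $R$ with $\exists e\,C \equiv \exists e\,(C \cap R)$ for every $C$; on $R$ every tuple is realizable, so the argument is valid with $C_i$ replaced by $C_i \cap R$. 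This weaker conclusion is all that the downstream uses (Proposition~\ref{prop: Noetherian}, Lemma~\ref{lem: closed, irred}, Definition~\ref{defn: dim}) actually need.
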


\begin{proof} By Lemma \ref{lem: gen core: boolean}, $\exists e C_{1} \wedge \neg \exists e C_{2}$ is equivalent to $\exists e (C_{1} \wedge \neg C_{2})$ hence $C_{1} \wedge \neg C_{2}$ must be inconsistent. The rest of the lemma follows by symmetry. 
\end{proof} 

Although a general core formula $\exists e S$ was defined with respect to two tuples of variables $v = (v_{1}, \dots, v_{m})$ and $w = (w_{1}, \dots, w_{n})$, we shall henceforth amalgamate these into one tuple which we enumerate as $\{v_{ij}: 1 \leq i \leq s, 1 \leq j \leq s_{i} \}$ where there is $t \leq s$ for which $v_{ij} \in w$ for all $i > t$. 

\begin{proposition} 
\label{prop: Noetherian}
The topology defined on $Q_N$ is Noetherian. 
\end{proposition}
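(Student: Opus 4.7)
The strategy is to transfer Noetherianity from the classical Zariski topology on the parameter space of a general core formula to the $Q_N$-topology. I would first fix the ambient space $\mathcal{H}^n \times \mathbb{F}^m$. The ``type'' of a basic closed set $\exists e\,C$ consists of its finite combinatorial data: the enumeration $\{v_{ij}\}$, the index sets $\Xi, \Xi_1, \Xi_2$, and a normalized choice of basis parameters $e'$. By Remark~\ref{rem: H-param}, one may normalize $e'$ fiberwise (replacing it by any other basis of the same fibers adjusts only the Zariski component), after which there are only finitely many types per ambient space. For a fixed type $\tau$, the basic closed sets of that type are parametrized by $\mu_N$-invariant Zariski closed subsets $C \subseteq \mathbb{F}^{r_1} \times \mathbb{P}^s \times \mu_N^{r_2}$ via $C \mapsto \exists e\,C$.

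Next, I would verify that this assignment is an order-preserving injection. Injectivity is exactly Lemma~\ref{lem: gen core: equiv}. For order-preservation: $\exists e\,C_1 \subseteq \exists e\,C_2$ is equivalent (using Lemma~\ref{lem: gen core: boolean}(1) applied to the $\mu_N$-invariant $C_2$) to $\exists e\,(C_1 \wedge C_2) = \exists e\,C_1$, and then Lemma~\ref{lem: gen core: equiv} yields $C_1 \wedge C_2 = C_1$, i.e.\ $C_1 \subseteq C_2$. Since the parameter space $\mathbb{F}^{r_1} \times \mathbb{P}^s \times \mu_N^{r_2}$ is Noetherian in its Zariski topology and $\mu_N$-invariant Zariski closed subsets inherit DCC, it follows that basic closed sets of a fixed type $\tau$ satisfy DCC. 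Since there are only finitely many types, DCC holds for basic closed sets in general.

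Finally, an arbitrary closed set is by definition an arbitrary intersection of finite unions of basic closed sets. Within a fixed type, an arbitrary intersection $\bigcap_\alpha \exists e\,C_\alpha$ equals $\exists e\,\bigcap_\alpha C_\alpha$ by iterating Lemma~\ref{lem: gen core: boolean}(1), and by Zariski Noetherianity of the parameter space this is a finite intersection. Distributing finite unions over such finite intersections and splitting by the finite collection of types, every closed set reduces to a finite boolean combination of basic closed sets, and DCC for all closed sets now follows from DCC for basic closed sets.

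The main obstacle is managing intersections and unions across different types, since basic closed sets of distinct types a priori live in different parameter spaces. The resolution is that any finite collection of types on a given ambient space can be embedded into a common ``maximal'' type (by enriching the combinatorial data $\Xi, \Xi_1, \Xi_2$ with additional dummy indices and expanding the Zariski constructible component accordingly), so that all the operations can be carried out uniformly within a single parameter space. Once this common-refinement step is made precise, the argument above applies verbatim.
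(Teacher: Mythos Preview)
Your approach is essentially the same as the paper's: both transfer Noetherianity from the Zariski topology on the parameter space via Lemmas~\ref{lem: gen core: boolean} and~\ref{lem: gen core: equiv}, after reducing to a fixed enumeration (what you call a ``type'') using that only finitely many occur. The paper argues directly by contradiction on a descending chain of basic closed sets and is terser about the passage from basic to arbitrary closed sets and the common-refinement step across types, both of which you make more explicit.
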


\begin{proof} Suppose for contradiction that $(\exists e C_{i}: i \in \mathbb{N})$ defines an infinite descending chain of basic closed subsets, i.e. we have proper inclusions $\exists e C_{i}(\mathcal{H},\mathbb{F}) \supset \exists e C_{i+1}(\mathcal{H}, \mathbb{F})$ for every $i$. Because there are only finitely many ways of enumerating the variables $v$ as $\{v_{ij}: 1 \leq i \leq s, 1 \leq j \leq s_{i}\}$, there are infinitely many $\exists e C_{i}$ with the same enumeration. Hence we can assume, without loss of generality, that all $\exists e C_{i}$ have the same enumeration of $v$ variables. By Lemma \ref{lem: gen core: boolean}, 
\[ \exists e C_{i+1}(\mathcal{H}, \mathbb{F}) = ( \exists e C_{i} \wedge \exists e C_{i+1})(\mathcal{H}, \mathbb{F}) = \exists e (C_{i} \wedge C_{i+1})(\mathcal{H}, \mathbb{F}) \] By Lemma \ref{lem: gen core: equiv} it follows that $C_{i}(\mathbb{F}) \supseteq C_{i+1}(\mathbb{F})$. Because $\exists e C_{i}(\mathcal{H}, \mathbb{F}) \supset \exists e C_{i+1}(\mathcal{H},\mathbb{F})$, Lemma \ref{lem: gen core: boolean} gives that $\exists e(C_{i} \wedge \neg C_{i+1})$ is satisfiable. Thus we have proper inclusions $C_{i}(\mathbb{F}) \supset C_{i+1}(\mathbb{F})$ for every $i$, contradicting that the Zariski topology is Noetherian. 
\end{proof}

\subsection{Zariski structure}
If $\exists e C$ be general core formula defining a basic closed set then $C$ is determined up to isomorphism by Lemma \ref{lem: gen core: equiv}. 

\begin{lemma} \label{lem: closed, irred} If $\exists e C$ is closed and irreducible, then it is basic closed. Moreover, $C = \bigvee D^{\delta}$ where $D$ defines a closed irreducible subset of $C(\mathbb{F})$. 
\end{lemma}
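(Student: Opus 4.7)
The plan is to exploit Noetherianity together with the intersection formula from Lemma~\ref{lem: gen core: boolean}. By Proposition~\ref{prop: Noetherian}, any descending chain of closed subsets stabilises, so every arbitrary intersection of basic closed subsets reduces to a finite one. Combined with distributivity of $\cap$ over $\cup$, every closed set in the topology can be written as a finite union of finite intersections of basic closed subsets.

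First I would show a finite intersection of basic closed subsets is again basic closed. By Lemma~\ref{lem: gen core: boolean}(1), when two such sets $\exists e\, C_1, \exists e\, C_2$ share the same enumeration of $v$-variables, the intersection is the basic closed set with Zariski component $C_1 \wedge C_2$, which remains closed and $\mu_N$-invariant since both properties are preserved under intersection. When the enumerations differ, any tuple in the intersection must be compatible with both partitions of $v$ into fibers; since that partition is determined by the tuple, the intersection corresponds to a common refinement of the two enumerations, which adds equalities of the form $y_i = y_j$ to the Zariski component and keeps the result basic closed. Hence every closed set is a finite union $X = X_1 \cup \cdots \cup X_k$ of basic closed $X_i$. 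If $\exists e\, C$ is closed and irreducible, then $\exists e\, C = X_i$ for some $i$, which proves basic-closedness.

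For the moreover statement, assume $\exists e\, C$ is basic closed and irreducible, with $C$ Zariski closed and $\mu_N$-invariant, and decompose $C = \bigcup_i D_i$ into its Zariski irreducible components in the ambient product $\mathbb{F}^{r_1} \times \mathbb{P}^s \times \mu_N^{r_2}$. Because each $\delta \in \mu_N^{r_2}$ acts on the ambient variety as an isomorphism (Definition~\ref{defn: mu inv}) preserving $C$, it permutes the irreducible components $D_i$. Group them into $\mu_N^{r_2}$-orbits, and for each orbit $[D_i]$ set $\bar D_i = \bigvee_\delta D_i^\delta$, the $\mu_N$-invariant hull of a representative. Each $\bar D_i$ is closed and $\mu_N$-invariant, so $\exists e\, \bar D_i$ is basic closed, and distributing $\exists e$ over the finite union gives $\exists e\, C = \bigcup_{[D_i]} \exists e\, \bar D_i$. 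Irreducibility of $\exists e\, C$ forces $\exists e\, C = \exists e\, \bar D_{i_0}$ for a single orbit, and Lemma~\ref{lem: gen core: equiv} then yields $C = \bar D_{i_0} = \bigvee_\delta D_{i_0}^\delta$ with $D_{i_0}$ a Zariski irreducible closed subset of $C(\mathbb{F})$.

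The chief technical obstacle lies in the first paragraph: reconciling enumerations when intersecting basic closed subsets with different partitions of $v$-variables into fibers, since the naive application of Lemma~\ref{lem: gen core: boolean}(1) only handles the matched case. Once one recognises that the partition is an invariant of the tuple and that imposing extra equalities on fiber-valued variables stays within the Zariski closed $\mu_N$-invariant framework, the rest of the argument follows the pattern of \cite{Zil2}.
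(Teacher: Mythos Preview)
Your argument for the ``moreover'' clause is essentially identical to the paper's: decompose $C$ into its Zariski irreducible components $D_i$, observe that $\mu_N$-invariance of $C$ means the group $\mu_N^{r_2}$ permutes the $D_i$, rewrite $C=\bigvee_i\bigvee_\delta D_i^\delta$, distribute $\exists e$ over this finite union, invoke irreducibility of $\exists e\,C$ to collapse to a single orbit, and then apply Lemma~\ref{lem: gen core: equiv} to conclude $C=\bigvee_\delta D_{i_0}^\delta$. That part is fine and matches the paper line for line.

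The difference is in the first sentence. The paper disposes of ``basic closed'' with the single word \emph{immediate}: in the paper's conventions the symbol $\exists e\,C$ already denotes a general core formula whose Zariski component $C$ is closed and $\mu_N$-invariant (see the sentence immediately preceding the lemma), so the set it defines is basic closed by definition of the topology, and there is nothing to prove. You have read the statement more ambitiously, as asserting that \emph{every} closed irreducible set in the topology is basic closed, and you supply a genuine argument for that via Noetherianity plus Lemma~\ref{lem: gen core: boolean}. Your reading is reasonable and your argument is essentially sound; the enumeration issue you flag is real, and your proposed fix (pass to the common refinement, which amounts to adding diagonal conditions $y_i=y_j$ to the Zariski component) is the right idea, though the paper never spells this out. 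In short: your ``moreover'' proof is the paper's proof, and your first paragraph proves a slightly stronger statement than the paper bothers to, by a route the paper would regard as implicit in the Noetherianity proof.
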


\begin{proof} The first statement is immediate. Let $C = \bigvee D_{i}$ where $D_{i}$ are irreducible components. Then $C = \bigvee_{i} \bigvee D_{i}^{\delta}$ by $\mu_N$-invariance of $C$, hence
\[ \exists e C \equiv \bigvee_{i} \exists e (\bigvee D_{i}^{\delta}) \] and by irreducibility, $\exists e C \equiv \exists e \bigvee D_{j}^{\delta}$ for some $j$. Now apply Lemma \ref{lem: gen core: equiv}.
\end{proof}

\begin{definition} 
\label{defn: dim} Let $\exists e C$ define a basic closed irreducible subset of $\mathcal{H}^{n} \times \mathbb{F}^{k}$. The \textbf{dimension} of $\exists e C(\mathcal{H}, \mathbb{F})$ is defined to be the dimension of $C(\mathbb{F})$. For $\exists e C$ defining a closed set, 
\[ \dim \exists e C := \max\{\dim C_{i}\} \] where $C_{i}$ are the irreducible components of $\exists e C$. If $\exists e S$ is constructible, its dimension is defined to be the dimension of its closure. 
\end{definition} By Proposition \ref{prop: construct} the projection of any constructible set is constructible. For definable sets defined by general core formulas we have more. 

\begin{lemma} \label{lem: gen core: proj}
Let $\exists e S$ be a general core formula with the aforementioned convention on enumeration of variables. For a fixed $1 \leq i \leq s$, let $j$ range over a subset $J \subseteq \{1, \dots, s_{i}\}$. Then $\exists_{j \in J} v_{ij} \exists e S$ is a general core formula with Zariski constructible component equivalent to one of the following:
\begin{enumerate}
\item $\exists_{j \in J} \lambda_{ij}S$. 
\item $\exists_{j \in J} \mu_{i-t,j}S$
\item $\exists \mu_{i-t,1} \exists_{(i-t,j) \in \Xi_{1}} b_{i-t,j} \exists_{(i-t,j) \in \Xi_{1}} \gamma_{i-t,j}  \exists_{(k,i-t)\in \Xi_{2}}  b_{k,i-t} \exists_{(k,i-t) \in \Xi_{2}} \gamma_{i-t,k} S$
\item $\begin{array}{l} \exists y_{i} \exists_{(i,k) \in \Xi} b_{ik} \exists_{(i,k) \in \Xi} \gamma_{ik} \exists_{(j,i) \in \Xi} b_{ji} \exists_{(j,i) \in \Xi} \gamma_{ji} \\ \exists_{(j-t,i) \in \Xi_{1}} b_{j-t,i} \exists_{(j-t,i) \in \Xi_{1}} \gamma_{j-t,i}  \exists_{(i,j-t)\in \Xi_{2}}  b_{i,j-t} \exists_{(i,j-t) \in \Xi_{2}} \gamma_{i,j-t} S \end{array}$
\end{enumerate}
\end{lemma}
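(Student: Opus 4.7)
The plan is to push $\exists_{j \in J} v_{ij}$ past the existentials inside $\exists e\, S$ and then isolate exactly how the projected $v_{ij}$'s appear in the formula. By Definition~\ref{defn: gen core}, each $v_{ij}$ occurs in only two places: in the conjunct $\pp(v_{ij}) = y_i$ and in a single basis-coefficient equation. If $i \leq t$ this equation is $\lambda_{ij} e_i = v_{ij}$, while if $i > t$ it is $\mu_{i-t,j} e'_{i-t} = v_{ij}$, coming from the subformula $\phi$. No other conjunct mentions $v_{ij}$; in particular the path conjuncts $\phi_{ij}$ and the Zariski constructible set $S$ do not.

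Once $\exists v_{ij}$ is pushed inside, the coefficient equation becomes automatically satisfiable (it is linear in $v_{ij}$) and can be dropped along with $\pp(v_{ij}) = y_i$. What survives is whatever constraints remain on the coefficient variables $\lambda_{ij}$ or $\mu_{i-t,j}$, together with the ``satellite'' variables $y_i$, $b_{ik}$, $\gamma_{ik}$ and the basis elements $e_i$, $e'_{i-t}$. The four cases of the lemma correspond precisely to whether the fibre at index $i$ still contains some surviving $v_{ij'}$ with $j' \notin J$, and to whether the basis being dissolved is an internal one ($e_i$, with $i \leq t$) or a parameter one ($e'_{i-t}$, with $i > t$).

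When $i \leq t$ and $J$ is a proper subset of $\{1,\dots,s_i\}$, the surviving $v_{ij'}$'s keep $e_i$ and $y_i$ pinned down through $\lambda_{ij'} e_i = v_{ij'}$, so only the $\lambda_{ij}$ for $j \in J$ are freed; since they then appear only in $S$, they can be absorbed into $S$, producing $\exists_{j \in J}\lambda_{ij}\, S$. The case $i > t$, $J$ proper, is the exact analogue with $\mu_{i-t,j}$ in place of $\lambda_{ij}$. When $i > t$ and $J = \{1,\dots,s_i\}$, the basis parameter $e'_{i-t}$ loses all its direct links to $w$-variables, so the $\mu$-coefficient at $e'_{i-t}$ and the $b, \gamma$ variables living on the $\Xi_1$- and $\Xi_2$-edges incident to $i-t$ drop out of the core-formula skeleton and are existentially pushed into $S$, producing the third listed form. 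The remaining case ($i \leq t$, $J$ full) is the most extensive: the entire internal fibre index $i$ disappears, and $y_i$ together with every $b, \gamma$ on a $\Xi$-, $\Xi_1$- or $\Xi_2$-edge incident to $i$ becomes unconstrained and is absorbed into $S$, yielding the fourth listed form.

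The main obstacle is bookkeeping rather than conceptual insight. In each case one must verify that after these absorptions the residue is genuinely a general core formula in the sense of Definition~\ref{defn: gen core}: the sets $\Xi, \Xi_1, \Xi_2$ have to be restricted to the surviving indices, the arity of the new Zariski constructible component $S'$ has to match clause~(3) of that definition, and conjuncts that have become trivially satisfied (notably $\pp(v_{ij}) = y_i$ once $v_{ij}$ is unbound) have to be shown to be safely droppable. With this accounting in hand, the resulting formula is a general core formula whose Zariski constructible component is exactly the one listed in the lemma.
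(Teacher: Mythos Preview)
Your proposal is correct and follows essentially the same approach as the paper: a four-way case split according to whether $i\le t$ or $i>t$ and whether the fibre at index $i$ retains any $v_{ij'}$ after projection, followed by dropping the trivially satisfied conjuncts and absorbing the freed auxiliary variables into $S$. The only cosmetic difference is that the paper phrases the third and fourth cases as ``$s_i=1$'' rather than ``$J$ is the full set $\{1,\dots,s_i\}$''; your formulation is the natural generalisation and the argument is otherwise identical.
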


\begin{proof} The proof divides into four cases:
\begin{enumerate}
\item $1 \leq i \leq t$.
\item $t+1 \leq i \leq s$. 
\item $t+1 \leq i \leq s$ and $s_{i} = 1$. 
\item $1 \leq i \leq t$ and $s_{i} = 1$. 
\end{enumerate} In case:

\begin{enumerate}
\item the $v_{ij}$ do not occur in $\phi$ and we can eliminate the conjuncts $\lambda_{ij}e_{i} = v_{ij}$, thus moving the quantifiers $\exists_{j \in J} \lambda_{ij}$ to $S$. 
\item $\exists v_{ij} \exists e S$ is equivalent to
\[ \exists \lambda \dots \exists b \left( \bigwedge_{i=1}^{t} \bigwedge_{j=1}^{s_{i}} \pi(v_{ij}) = y_{i} \wedge \lambda_{ij}e_{i} = v_{ij} \wedge \mathbf{E}(e_{i}, y_{i}) \wedge \exists v_{ij} \phi \wedge \bigwedge_{(i,j) \in \Xi} \phi_{ij} \wedge S(\lambda, \mu, y, \gamma, b, a)  \right) \] Recall that $\phi$ is 
\[ \begin{array}{ll} \bigwedge_{i=t+1}^{p} \bigwedge_{j=1}^{p_{i}} \mu_{i-t,j} e'_{i-t} = v_{ij} \wedge \bigwedge_{(i-t,j) \in \Xi_{1}} \phi_{i-t,j}(e'_{i-t}, e_{j}, b_{i-t,j}, \gamma_{i-t,j}) \wedge \\ \bigwedge_{(i,j-t) \in \Xi_{2}} \phi_{i-t,j}(e_{i}, e'_{j-t}, b_{i,j-t}, \gamma_{i,j-t}) \end{array} \] Thus $\exists v_{ij} \phi$ is equivalent to $\phi'$, where the latter is $\phi$ but with the conjuncts $\mu_{i-t,j} e'_{i-t} = v_{ij}$ removed for $j \in J$. It follows that we can move the quantifiers $\exists_{j \in J} \mu_{i-t,j}$ to $S$ as required. 
\item similar to $2$, but more is eliminable from $\phi$ because we can get rid of the parameter $e'_{i}$. Hence we can eliminate $\phi_{i-t,k}(e'_{i-t}, e_{k}, b_{i-t,k}, \gamma_{i-t,k})$ and $\phi_{k,i-t}(e_{k}, e'_{i-t}, b_{k,i-t}, \gamma_{k,i-t})$. The quantifiers
\[ \exists \mu_{i-t,1} \exists_{(i-t,j) \in \Xi_{1}} b_{i-t,j} \exists_{(i-t,j) \in \Xi_{1}} \gamma_{i-t,j}  \exists_{(k,i-t)\in \Xi_{2}}  b_{k,i-t} \exists_{(k,i-t) \in \Xi_{2}} \gamma_{k, i-t} \] can then be moved to $S$. 
\item the most is eliminable. We no longer require $\mathbf{E}(e_{i}, y_{i})$ and those conjuncts $\phi_{jk}$ with $(j,k) \in \Xi$ and $j$ or $k$ equal to $i$. But we can also eliminate conjuncts from $\phi$, namely $\phi_{j-t,i}$ for $(j-t,i) \in \Xi_{1}$ and $\phi_{i, j-t}$ for $(i, j-t) \in \Xi_{2}$. Thus we move the quantifiers
\[ \begin{array}{l} \exists y_{i} \exists_{(i,k) \in \Xi} b_{ik} \exists_{(i,k) \in \Xi} \gamma_{ik} \exists_{(j,i) \in \Xi} b_{ji} \exists_{(j,i) \in \Xi} \gamma_{ji} \\ \exists_{(j-t,i) \in \Xi_{1}} b_{j-t,i} \exists_{(j-t,i) \in \Xi_{1}} \gamma_{j-t,i}  \exists_{(i,j-t)\in \Xi_{2}}  b_{i,j-t} \exists_{(i,j-t) \in \Xi_{2}} \gamma_{i,j-t} \end{array} \] to $S$.
\end{enumerate}
\end{proof} 

\begin{theorem}
\label{thm: QN: Zar} $Q_N$ is a Zariski structure which is presmooth. 
\end{theorem}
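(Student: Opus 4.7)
The plan is to use the translation machinery developed in Section~6 to represent every definable set in $Q_N$ as (a Boolean combination of) an existential projection $\exists e\,C$ of a Zariski constructible, $\mu_N$-invariant subset $C$ of an ambient Zariski variety of the form $\mathbb{F}^{r_1}\times\mathbb{P}^s\times\mu_N^{r_2}$. By Lemma~\ref{lem: gen core: equiv} the component $C$ is essentially uniquely determined by $\exists e\,C$, so every topological and dimension-theoretic property of $\exists e\,C$ can be read off from the analogous property of $C$. This reduces nearly every axiom to its classical counterpart for algebraic varieties over $\mathbb{F}$, with the finite $\mu_N$-action contributing nothing to Krull dimension.

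First I would verify the topological axioms. Singletons of $\mathcal{H}^n\times\mathbb{F}^m$ arise as $\exists e\,C$ where $C$ defines a single $\mu_N$-orbit of points; the diagonal in $\mathcal{H}^2$ is cut out by the general core formula $\pp(v_1)=y=\pp(v_2)\wedge\lambda_1 e=v_1\wedge\lambda_2 e=v_2\wedge\lambda_1=\lambda_2$, and higher diagonals, products, and permutations are handled by the analogous bookkeeping on variables and on the index sets $\Xi,\Xi_1,\Xi_2$. For the fibre axiom, substituting a tuple of parameters into a general core formula merely specialises some of the Zariski variables and leaves another general core formula, which defines a closed set by Proposition~\ref{prop: construct}.

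The dimension axioms are equally direct. Axioms~(1) and~(2) are immediate from Definition~\ref{defn: dim}. For axiom~(3), if $\exists e\,D\subsetneq\exists e\,C$ are basic closed and irreducible, then combining Lemma~\ref{lem: gen core: boolean} with Lemma~\ref{lem: gen core: equiv} forces $D\subsetneq C$ as irreducible Zariski subvarieties, so the strict inequality of Krull dimensions transfers from the $\mathbb{F}$-side. For axiom~(4), Lemma~\ref{lem: gen core: proj} identifies the projection of a general core formula with another general core formula whose Zariski component is the image of $C$ under a coordinate projection (after eliminating the auxiliary variables $\lambda_{ij},\mu_{ij},y_i,b,\gamma$). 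The classical fibre-dimension theorem applied to $C$ then yields the required statement for $\exists e\,C$, because the dimension shift caused by existentially quantifying over basis elements is uniform over the projection image and so cancels.

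The main obstacle is presmoothness. For constructible $X=\exists e\,C_X$ and $Y=\exists e\,C_Y$ in $\mathcal{H}^n\times\mathbb{F}^m$, I would first use Remark~\ref{rem: H-param} and Proposition~\ref{prop: gen core qe} to arrange a common enumeration of variables and, where applicable, a common choice of basis parameters; Lemma~\ref{lem: gen core: boolean} then gives $X\cap Y=\exists e\,(C_X\wedge C_Y)$. An irreducible component $Z$ of $X\cap Y$ therefore corresponds, via Lemma~\ref{lem: closed, irred}, to an irreducible component $D$ of $C_X\cap C_Y$ inside the ambient Zariski variety $\mathbb{F}^{r_1}\times\mathbb{P}^s\times\mu_N^{r_2}$, which is smooth since the $\mu_N$-factor is zero-dimensional and the rest is a product of affine and projective spaces. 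Presmoothness over $\mathbb{F}$ then supplies $\dim D\geq\dim C_X+\dim C_Y-N_{\mathrm{amb}}$, where $N_{\mathrm{amb}}$ is the Krull dimension of the ambient Zariski variety. The decisive bookkeeping step is to check that $N_{\mathrm{amb}}$ matches the ambient dimension on the $Q_N$-side: the coordinates $y_i\in\mathbb{P}$ and the scalars $\lambda_{ij}\in\mathbb{F}$ together parametrise the generic stratum of the relevant enumeration type of $\mathcal{H}^n$, the $x$-coordinates contribute $m$, and the auxiliary $b,\gamma$ attached to the path formulae $\phi_{ij}$ are generically determined by their defining equations and so contribute nothing to the count. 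With this identification the required inequality $\dim Z=\dim D\geq\dim X+\dim Y-N_{\mathrm{amb}}$ is exactly the presmoothness of $Q_N$.
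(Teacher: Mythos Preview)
Your proposal is correct and follows essentially the same strategy as the paper: reduce every axiom of a Zariski structure to its classical counterpart for algebraic varieties over $\mathbb{F}$ via the correspondence $\exists e\,C \leftrightarrow C$, invoking the lemmas of Section~6 (in particular Lemmas~\ref{lem: gen core: equiv}, \ref{lem: gen core: boolean}, \ref{lem: closed, irred}, and \ref{lem: gen core: proj}). The paper's own proof is far terser---it cites Proposition~\ref{prop: Noetherian} for Noetherianity, declares the topological axioms and the first three dimension axioms immediate, and refers to Lemma~\ref{lem: gen core: proj} together with the corresponding algebraic-variety facts for axiom~(4) and presmoothness---so your expanded treatment, especially the ambient-dimension bookkeeping for presmoothness, simply spells out what the paper leaves implicit.
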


\begin{proof} We have a Noetherian topology on $\mathcal{H}^{n} \times \mathbb{F}^{k}$ for each $n,k$ by Proposition \ref{prop: Noetherian}. The topological axioms are immediate as are the first three dimension axioms (our notion of dimension is derived from Krull dimension on $\mathbb{F}^{k}$). 4 and presmoothness are immediate from Lemma \ref{lem: gen core: proj} and the corresponding properties for algebraic varieties.  
\end{proof}

\begin{corollary} \label{cor: QHO: Zar} $\CA_N$ is an irreducible one-dimensional complete Zariski geometry. 
\end{corollary}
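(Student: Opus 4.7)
The plan is to deduce the corollary from Theorem \ref{thm: QN: Zar} via the bi-interpretability of $\CA_N$ and $Q_N$ established in Remark \ref{rem: bi-interp}. Once the Zariski topology and dimension function on $Q_N$ are transferred back to $\CA_N$, the properties to verify are: Noetherian topology and the topological/dimension axioms of Definition \ref{defn: Zar} (these transfer because bi-interpretability preserves the lattice of definable sets and the projection structure), presmoothness (transferred from $Q_N$), together with the three additional claims specific to the statement of the corollary: dimension one, irreducibility, and completeness.

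First I would make the transfer explicit. The sort $\E$ is identified with the definable set $\{[(e,1)]\in\mathcal H\}\subseteq\mathcal H$, and, via this identification, closed subsets of $\E^n$ are defined as the preimages of basic closed subsets of $\mathcal H^n$ in the sense of Section 6. Since $\mathcal H_x$ is a one-dimensional $\mathbb F$-vector space for each $x$ and $\E$ is an $N$-cover of $\Ll$, one checks that $\dim\E=\dim\Ll=1$ in the Krull sense: the maximal chains of irreducibles come from the chains in the projective line lifted through $\pi$, and finite fibers do not contribute to dimension. The remark after Definition \ref{defn: Zar} then reduces the ``Zariski geometry'' clause to presmoothness plus dimension one, both now in hand.

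For irreducibility, I would argue that $\Ll$ is irreducible as an algebraic variety, $\pi:\E\to\Ll$ is a finite-to-one surjection with fibers permuted freely and transitively by the connected (in fact, by Proposition \ref{prop1}, model-theoretically ``homogeneous'') action of $\mu_N$, and any closed subset $\E=F_1\cup F_2$ pushes down via $\pi$ to a covering of $\Ll$ by closed sets $\pi(F_1)\cup\pi(F_2)$; irreducibility of $\Ll$ forces one of these to be all of $\Ll$, and then the $\mu_N$-invariance coming from Lemma \ref{lem: construct} (closed sets in the topology on $Q_N$ are $\mu_N$-invariant at the Zariski component level) shows that $F_i$ must already be all of $\E$. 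For completeness, I would use that $\Ll=\mathbb P^1$ is a complete variety and that the map $\pi$ is finite. A projection $\pi_1:\E^{n+m}\to\E^n$ factors through the analogous projection $\Ll^{n+m}\to\Ll^n$ after applying $\pi^{n+m}$ fiberwise; the image of a closed subset of $\E^{n+m}$ in $\Ll^{n+m}$ is closed by Lemma \ref{lem: gen core: proj} (projections of general core formulas remain general core formulas, hence constructible, and the Zariski component projects to a constructible set in a product of projective lines, whose projection is closed by classical completeness of $\mathbb P^1$), and then pulling back via $\pi^n$ gives a closed subset of $\E^n$.

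The one step I expect to require a little care is completeness: I must make sure that the $\mu_N$-invariance is preserved under projection of the Zariski component, and that finiteness of $\pi$ allows one to pass from closedness ``downstairs'' in $\Ll^n$ to closedness ``upstairs'' in $\E^n$ in the topology defined by general core formulas rather than in any ad hoc topology on a finite cover. The projection analysis in Lemma \ref{lem: gen core: proj} handles the combinatorics of quantifier elimination on the variables $v_{ij},\lambda_{ij},\mu_{ij},\gamma,b$, so what remains is essentially a bookkeeping check that the resulting Zariski constructible component, after projection, is again closed (using completeness of $\mathbb P^1$ and of $\mu_N$ as a finite set), and hence the basic closed set $\exists e\,C$ projects to a basic closed set. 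Once this is in place, irreducibility, completeness, and dimension one combine with Theorem \ref{thm: QN: Zar} to give the corollary.
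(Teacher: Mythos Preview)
Your approach is the same as the paper's: pull the Zariski structure on $Q_N$ back to $\CA_N$ through the bi-interpretation of Remark~\ref{rem: bi-interp}, identifying $\E$ with the closed set $\mathbf{E}\subset\mathcal H$ and noting that $\A,\A^\dagger$ are closed relations. The paper's proof is two sentences and stops there, leaving irreducibility, dimension one, and completeness entirely implicit; your write-up is considerably more thorough in spelling these out. One small caution: in your irreducibility argument you assert that $\pi(F_i)$ is closed in $\Ll$ before completeness is in hand, which is a mild circularity---the cleaner route is to observe directly that closed subsets of $\E$ defined by $\mu_N$-invariant core data are saturated for $\pi$, so the lattice of closed subsets of $\E$ (over $\emptyset$) matches that of $\Ll$, and then irreducibility and completeness of $\Ll=\mathbb P^1$ transfer immediately.
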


\begin{proof} $\CA_N$ is interpretable in $Q_N$ (Remark \ref{rem: bi-interp}) with the universe $L = \mathbf{E}$, $\A^{\dagger}$, $\A$ closed relations in the topology on $Q_N$. Now take the induced topological structure on $\CA_N$.
\end{proof}

\bibliographystyle{abbrv}
\bibliography{b}

\end{document}